 \newtheorem{theorem}{Theorem}[section]
 \newtheorem{corollary}[theorem]{Corollary}
 \newtheorem{lemma}[theorem]{Lemma}
 \newtheorem{proposition}[theorem]{Proposition}
 \theoremstyle{definition}
 \theoremstyle{remark}
 \newtheorem{remark}[theorem]{Remark}
 \newtheorem{remarks}[theorem]{Remarks}
 \numberwithin{equation}{section}
\newcommand{\highlight}[1]{{\it #1}}
\newcommand{\rank}{\mathop{\mathrm{rk}}\nolimits}
\newcommand{\supp}{\mathop{\mathrm{supp}}\nolimits}
\newcommand{\height}{\mathop{\mathrm{ht}}\nolimits}
\newcommand{\soc}[1]{\mathcal{I}_{#1}}
\newcommand{\group}[1]{\mathbf{#1}}
\begin{document}

\title[On adjoint orbits in nilpotent ideals of a Borel subalgebra]{On adjoint orbits in nilpotent ideals of a Borel subalgebra}

\author[Rupert Yu]{Rupert W. T. Yu}

\address{%
Laboratoire de Math\'ematiques de Reims UMR 9008 CNRS\\ 
Universit\'e de Reims Champagne Ardenne\\
Moulin de la Housse - BP 1039\\
51687 REIMS cedex 2\\
France}

\email{rupert.yu@univ-reims.fr}

\begin{abstract}
Let $\mathfrak{m}$ be a nilpotent ideal in the Borel subalgebra $\mathfrak{b}$ of a complex 
finite-dimensional semisimple Lie algebra, and $\mathfrak{m}^{\bullet}$ the subset of 
(ad-)nilpotent elements in $\mathfrak{b}$ such that $\mathfrak{m}$ is the minimal ideal containing them. 
This set is stable under the adjoint action of the corresponding Borel subgroup $\group{B}$. We prove that 
$\mathfrak{m}^{\bullet}$ contains a unique closed $\group{B}$-orbit which is the orbit of a nilpotent 
element whose support is the set of minimal roots associated to the root space decomposition 
of $\mathfrak{m}$.
\end{abstract}

\subjclass{Primary 17B20, 14L30 ; Secondary 17B22, 20G05}

\keywords{adjoint orbit, nilpotent ideal, antichains}

\maketitle

\section{Introduction}

Let $\mathfrak{b}$ be a Borel subalgebra of a complex finite-dimensional semisimple Lie algebra 
$\mathfrak{g}$, $\group{G}$ the adjoint algebraic group of $\mathfrak{g}$ and $\group{B}$ the 
Borel subgroup of $\group{G}$ whose Lie algebra is $\mathfrak{b}$.
We fix a decomposition $\mathfrak{b} = \mathfrak{h}\oplus \mathfrak{n}$ where $\mathfrak{n}$
is the nilradical of $\mathfrak{b}$ and $\mathfrak{h}$ is a Cartan subalgebra of $\mathfrak{b}$.
Observe that $\mathfrak{n}$ is also the set of (ad-)nilpotent elements in $\mathfrak{b}$.

Adjoint nilpotent $\group{G}$-orbits in $\mathfrak{g}$ are classified by many combinatorial objects. 
They are well studied,
and have many nice characterisations and properties (finiteness, algebraic and geometric). In contrast, 
$\group{B}$-orbits of $\mathfrak{n}$ are less well understood. There are in general an infinite number
of them, and they are not necessary conical. Kashin \cite{K} determined exactly when there is a 
finite number of $\group{B}$-orbits in $\mathfrak{n}$. Hille and R\"ohrle \cite{HR} obtained a more
general result using a quiver model. Boos \cite{B} and Melnikov \cite{M1} studied $\group{B}$-orbits 
in certain subvarieties
of $\mathfrak{n}$ in type $A$. Note also that
connected components of the intersection of a nilpotent $\group{G}$-orbit with $\mathfrak{b}$ are called 
orbital varieties which are related to irreducible components of the corresponding Springer 
fibers \cite{S,FM}, and relations between nilpotent $\group{G}$-orbits and nilpotent ideals in 
$\mathfrak{b}$ are studied in \cite{So2}.

Since $\mathfrak{g}$ is semisimple,
an ideal in $\mathfrak{b}$ is nilpotent if and only if it is contained in $\mathfrak{n}$.
For any nilpotent ideal $\mathfrak{m}$ of $\mathfrak{b}$, we denote by $\mathfrak{m}^{\bullet}$
the set of $X\in \mathfrak{n}$ such that $\mathfrak{m}$ is the minimal nilpotent ideal in $\mathfrak{b}$
containing $X$. Thus $\mathfrak{n}$ is the disjoint union of $\mathfrak{m}^{\bullet}$ where $\mathfrak{m}$
runs through all nilpotent ideals of $\mathfrak{b}$.

Let $\Delta \supset \Delta^{+} \supset \Pi$ be respectively the root system of $\mathfrak{g}$, the 
set of positive roots and the set of simple roots relative to $( \mathfrak{b} , \mathfrak{h})$. Denote by 
$\mathfrak{g}_{\alpha}$ the root subspace relative to $\alpha$ in $\mathfrak{g}$. 
For any nilpotent ideal $\mathfrak{m}$ of $\mathfrak{b}$, there exists a unique subset 
$\Delta_{\mathfrak{m}}^{+}$ of $\Delta^{+}$ such that
$$
\mathfrak{m} = \bigoplus_{\alpha\in \Delta^{+}_{\mathfrak{m}}} \mathfrak{g}_{\alpha}.
$$
Denote by $\soc{\mathfrak{m}}$ the set of minimal roots in $\Delta_{\mathfrak{m}}^{+}$
with respect to the usual partial order $\prec$ on $\Delta$ given by 
$\alpha \prec \beta$ if $\beta - \alpha$ is a sum of simple roots. 
The main result of the paper is the following theorem.

\begin{theorem}\label{main-theorem}
Let $\mathfrak{m}$ be a nilpotent ideal of $\mathfrak{b}$ and 
$X\in \mathfrak{m}^{\bullet}\cap \bigoplus_{\alpha \in \soc{\mathfrak{m}}} \mathfrak{g}_{\alpha}$.
Then the $\group{B}$-orbit of $X$ is the unique closed $\group{B}$-orbit in the $\group{B}$-stable 
subvariety $\mathfrak{m}^{\bullet}$.
\end{theorem}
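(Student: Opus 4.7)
My plan is to exploit the $\group{B}$-equivariant quotient
\[
\pi \colon \mathfrak{m} \longrightarrow V \;:=\; \mathfrak{m}/\mathfrak{m}', \qquad
\mathfrak{m}' \;:=\; \bigoplus_{\alpha \in \Delta_\mathfrak{m}^{+} \setminus \soc{\mathfrak{m}}} \mathfrak{g}_\alpha,
\]
where $\mathfrak{m}'$ is the $\group{B}$-stable sub-ideal of $\mathfrak{b}$ obtained by deleting the minimal root spaces of $\mathfrak{m}$.  Since $[\mathfrak{n},\mathfrak{m}] \subseteq \mathfrak{m}'$, the unipotent radical $\group{N}$ of $\group{B}$ acts trivially on $V \cong \bigoplus_{\alpha \in \soc{\mathfrak{m}}} \mathfrak{g}_\alpha$, and the $\group{B}$-action on $V$ factors through the $\group{H}$-action by the characters in $\soc{\mathfrak{m}}$.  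Invoking the classical fact that the roots in any antichain of $\Delta^{+}$ are $\mathbb{Z}$-linearly independent, these characters are linearly independent, so $\group{H}$ acts transitively on the open subset $V^{\times} := \prod_{\alpha \in \soc{\mathfrak{m}}}(\mathfrak{g}_\alpha \setminus \{0\})$ of $V$, and $\pi$ restricts to a surjective $\group{B}$-equivariant morphism $\mathfrak{m}^{\bullet} \to V^{\times}$ onto a $\group{B}$-homogeneous space.

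This identifies $\mathfrak{m}^{\bullet}$ with the associated fibre bundle $\group{B} \times_{\group{B}_0}(X + \mathfrak{m}')$, where $\group{B}_0 := \mathrm{Stab}_{\group{B}}(\pi(X)) = \group{H}_0 \cdot \group{N}$ and $\group{H}_0 := \{h \in \group{H} : \alpha(h) = 1 \text{ for all } \alpha \in \soc{\mathfrak{m}}\}$.  The $\group{B}$-orbits on $\mathfrak{m}^{\bullet}$ are thus in bijection with $\group{B}_0$-orbits on the affine slice $X + \mathfrak{m}'$, and the orbit of $X$ meets the slice in $\group{B}_0 \cdot X = \group{N} \cdot X$ (because $\group{H}_0$ fixes $X$ pointwise).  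Since $\group{N}$ is unipotent acting on the affine space $X + \mathfrak{m}'$, its orbit $\group{N} \cdot X$ is automatically Zariski-closed, whence $\group{B} \cdot X$ is closed in $\mathfrak{m}^{\bullet}$.

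For uniqueness I would construct a cocharacter $\lambda$ of $\group{H}_0$ -- that is, $\lambda \in \mathfrak{h}$ with $\alpha(\lambda) = 0$ for every $\alpha \in \soc{\mathfrak{m}}$ -- satisfying $\beta(\lambda) > 0$ for every $\beta \in \Delta_\mathfrak{m}^{+} \setminus \soc{\mathfrak{m}}$.  Granted such $\lambda$, for any $Y \in X + \mathfrak{m}'$ the action $\lambda(t) \cdot Y$ has socle-components equal to those of $X$ and non-socle components of positive $t$-weight, so $\lim_{t \to 0} \lambda(t) \cdot Y = X$.  Hence $X$ lies in the closure of every $\group{B}_0$-orbit on $X + \mathfrak{m}'$, and $\group{B}\cdot X$ is the unique closed $\group{B}$-orbit on $\mathfrak{m}^{\bullet}$.

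The main obstacle I anticipate is producing $\lambda$.  By Farkas' lemma its existence is equivalent to ruling out a nontrivial nonnegative combination of elements of $\Delta_\mathfrak{m}^{+} \setminus \soc{\mathfrak{m}}$ lying in the $\mathbb{R}$-span of $\soc{\mathfrak{m}}$, a compatibility which must be extracted from the root-poset structure together with the antichain property of $\soc{\mathfrak{m}}$.  In the degenerate cases where such a $\lambda$ cannot exist (most notably when $\soc{\mathfrak{m}} \subseteq \Pi$, so that $\mathfrak{m} = \mathfrak{n}$), one has to argue directly by tracking the action of $\group{N}$ on the generators $e_\alpha$ ($\alpha \in \soc{\mathfrak{m}}$): using the bracket relations $[\mathfrak{n}, X] \subseteq \mathfrak{m}'$ and the fact that every $\beta \in \Delta_\mathfrak{m}^+\setminus\soc{\mathfrak{m}}$ is reachable from some $\alpha \in \soc{\mathfrak{m}}$ by a chain of simple-root additions, one verifies that $\group{N} \cdot X$ already fills out $X + \mathfrak{m}'$, so $\mathfrak{m}^{\bullet}$ is a single $\group{B}$-orbit and uniqueness is automatic.
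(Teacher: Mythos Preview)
Your argument that $\group{B}\cdot X$ is closed is correct and cleaner than the paper's: reducing to the fibre $X+\mathfrak{m}'$ and using that unipotent orbits on an affine variety are closed is all that is needed. The paper instead deduces closedness only after establishing, via a case-by-case construction (Theorem~\ref{antichain-hyperplane}), that $\Omega_{\group{B}}(X)$ has minimal dimension.

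The uniqueness part, however, has a genuine gap. Your cocharacter $\lambda\in\mathfrak{h}$ with $\alpha(\lambda)=0$ for $\alpha\in\soc{\mathfrak m}$ and $\beta(\lambda)>0$ for $\beta\in\Delta_{\mathfrak m}^{+}\setminus\soc{\mathfrak m}$ fails to exist precisely when some root of $\Delta_{\mathfrak m}^{+}\setminus\soc{\mathfrak m}$ lies in the span of $\soc{\mathfrak m}$, i.e.\ whenever the root subsystem $\Delta_{\soc{\mathfrak m}}$ (see Remark~\ref{antichain-remark}) is not of type $A_1^{\,k}$. This happens far beyond the case $\soc{\mathfrak m}\subseteq\Pi$. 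For instance, in type $A_5$ take $\soc{\mathfrak m}=\{\alpha_1+\alpha_2,\ \alpha_3+\alpha_4\}$: then $(\alpha_1+\alpha_2)+(\alpha_3+\alpha_4)=\alpha_1+\alpha_2+\alpha_3+\alpha_4\in\Delta_{\mathfrak m}^{+}\setminus\soc{\mathfrak m}$, so no such $\lambda$ exists. Your fallback (``$\group{N}\cdot X=X+\mathfrak m'$'') also fails here: one checks $\dim[\mathfrak n,X]=5<6=\dim\mathfrak m'$ (the root $\alpha_2+\alpha_3+\alpha_4+\alpha_5$ is not reached), so $\mathfrak m^{\bullet}$ has more than one $\group{B}$-orbit.

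The paper circumvents this in two independent ways. First, uniqueness is obtained purely from invariant theory (Proposition~\ref{invariant-prop} and Theorem~\ref{intersection-theorem}): any two closed $\group{B}$-stable subsets of $\mathfrak m^{\bullet}$ meet. Second, the contraction you seek \emph{does} exist once one enlarges $\group{T}$ to $\widehat{\group{T}}=\group{T}\times\mathbb{C}^{*}$: Theorem~\ref{antichain-hyperplane} produces $H\in\mathfrak h$ with $\gamma(H)=n>0$ constant on $\soc{\mathfrak m}$ and $\alpha(H)>0$ for all $\alpha\in\Pi$, so the one-parameter subgroup $t\mapsto(\exp((\log t)H),\,t^{-n})\in\widehat{\group{T}}$ acts trivially on the socle components and with strictly positive weights on $\mathfrak m'$. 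This is exactly your $\lambda$, but realised in $\widehat{\group{T}}$ rather than in $\group{H}_0$; since $\widehat{\group{T}}\not\subseteq\group{B}$, one then needs an additional dimension argument (Corollary~\ref{description}) together with the uniqueness from Section~\ref{sec:uniqueness} to conclude. Either way, the missing ingredient is substantial and cannot be bypassed by the two cases you outline.
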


The key observation here is that being a set of pairwise non comparable positive
roots, a result of Sommers \cite{So} says that $\soc{\mathfrak{m}}$ is conjugate 
under the Weyl group of $\mathfrak{g}$ to a subset of $\Pi$. This allows us 
in section \ref{sec:uniqueness} to obtain sufficient information on $\mathfrak{n}$-invariant functions on 
$\mathfrak{m}$ in order to prove that $\mathfrak{m}^{\bullet}$ contains a unique 
closed $\group{B}$-orbit. In section \ref{sec:description}, we prove a nice geometrical
property (Theorem \ref{antichain-hyperplane})
for sets consisting of pairwise non comparable positive roots which allows us to precise 
exactly this closed orbit by studying orbits under the Cartan subgroup $\group{T}$ in 
$\group{B}$ whose Lie algebra is $\mathfrak{h}$.
\smallskip

\paragraph{Notations.} We shall conserve the notations above in the rest of the paper. Denote by 
$\group{U}$ the unipotent radical of $\group{B}$ which is the subgroup of $\group{B}$ generated by 
$\exp (\mathrm{ad} (X) )$, 
$X\in \mathfrak{n}$. Recall that $\mathfrak{n}$ is the Lie algebra of $\group{U}$ and 
$\group{B} = \group{T} \ltimes \group{U}$.

For any linearly independent subset $\soc{}$ of $\Delta$ and 
$\mathbb{A} \subset \mathbb{C}$, we shall denote by 
$\mathbb{A} \soc{}$ the set of elements of $\mathfrak{h}^{*}$ of the form
$\sum_{\alpha\in \soc{}} c_{\alpha} \alpha$ where $c_{\alpha} \in \mathbb{A}$ for all $\alpha\in\soc{}$.
Given $\beta =  \sum_{\alpha\in \soc{}} c_{\alpha} \alpha \in \mathbb{A} \soc{}$, we shall denote by 
$\height_{\soc{}} (\beta ) =\sum_{\alpha\in \soc{}} c_{\alpha}$ the height of $\beta$ with respect to 
$\soc{}$.

\section{Nilpotent elements, nilpotent ideals and antichains}\label{sec:socle}

Let us fix a basis $(X_{\alpha})_{\alpha\in\Delta^{+}}$ of $\mathfrak{n}$ where 
$X_{\alpha}\in \mathfrak{g}_{\alpha}$.
For $X = \sum_{\alpha\in \Delta^{+}} c_{\alpha} X_{\alpha} \in\mathfrak{n}$, we set 
$$
\supp (X) = \{ \alpha ; c_{\alpha} \neq 0\} \ 
$$
the support of $X$, $\soc{X}$ the set of minimal roots in $\supp(X)$ and $\mathfrak{m}_{X}$ 
the minimal nilpotent ideal in $\mathfrak{b}$ containing $X$.
The following lemmas are immediate from the definitions.

\begin{lemma}\label{socle-element}
Let $X\in \mathfrak{n}$ and $\Delta_{X}^{+} 
= \{ \alpha \in \Delta^{+} ; \alpha \succcurlyeq \beta$ for some 
$\beta\in \soc{X} \}$.
\begin{enumerate}
\item We have $\mathfrak{m}_{X} = \bigoplus_{\alpha\in \Delta_{X}^{+}} \mathfrak{g}_{\alpha}$.
In particular, $\soc{X} = \soc{\mathfrak{m}_{X}}$.
\item For any $\sigma \in \group{B}$, we have $\mathfrak{m}_{X} = \mathfrak{m}_{\sigma ( X )}$ and 
$\soc{X}=\soc{\sigma( X )}$.
\end{enumerate}
\end{lemma}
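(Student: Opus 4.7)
The plan is to rely on the standard structural bijection between nilpotent ideals of $\mathfrak{b}$ and upper sets of $(\Delta^{+},\preceq)$. Any ideal $\mathfrak{m}$ of $\mathfrak{b}$ is automatically $\mathfrak{h}$-stable (since $\mathfrak{h}\subset\mathfrak{b}$) and hence of the form $\mathfrak{m}=\bigoplus_{\alpha\in\Delta^{+}_{\mathfrak{m}}}\mathfrak{g}_{\alpha}$. The upper set property of $\Delta^{+}_{\mathfrak{m}}$ follows from the classical non-vanishing $[\mathfrak{g}_{\gamma},\mathfrak{g}_{\alpha}]=\mathfrak{g}_{\alpha+\gamma}$ whenever $\gamma$ is simple and $\alpha+\gamma\in\Delta^{+}$, iterated along a chain of covers by simple roots in any relation $\alpha\preceq\beta$. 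Conversely every upper set defines a nilpotent ideal.

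For part (1), I would set $\mathfrak{m}'=\bigoplus_{\alpha\in\Delta_{X}^{+}}\mathfrak{g}_{\alpha}$. By its definition $\Delta_{X}^{+}$ is the upward closure of $\soc{X}$, hence an upper set, so $\mathfrak{m}'$ is a nilpotent ideal. Every element of $\supp(X)$ dominates a minimal element of $\supp(X)$, i.e.~an element of $\soc{X}$, so $\supp(X)\subset\Delta_{X}^{+}$ and $X\in\mathfrak{m}'$, giving $\mathfrak{m}_{X}\subset\mathfrak{m}'$. Conversely, writing $\mathfrak{m}_{X}=\bigoplus_{\alpha\in\Delta^{+}_{\mathfrak{m}_{X}}}\mathfrak{g}_{\alpha}$, the containment $X\in\mathfrak{m}_{X}$ forces $\soc{X}\subset\supp(X)\subset\Delta^{+}_{\mathfrak{m}_{X}}$, and the upper set property then yields $\Delta_{X}^{+}\subset\Delta^{+}_{\mathfrak{m}_{X}}$, i.e.~$\mathfrak{m}'\subset\mathfrak{m}_{X}$. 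The equality $\soc{X}=\soc{\mathfrak{m}_{X}}$ is then a combinatorial check: each $\beta\in\soc{X}$ is minimal in $\supp(X)$ and hence in the upward closure $\Delta_{X}^{+}$, while any minimal element of $\Delta_{X}^{+}$ dominates, by definition, some $\beta\in\soc{X}\subset\Delta_{X}^{+}$ and therefore equals it.

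For part (2), I would invoke $\group{B}$-stability of any ideal of $\mathfrak{b}$: the connected group $\group{B}$ has Lie algebra $\mathfrak{b}$, and the infinitesimal condition $[\mathfrak{b},\mathfrak{m}_{X}]\subset\mathfrak{m}_{X}$ integrates to $\sigma(\mathfrak{m}_{X})=\mathfrak{m}_{X}$ for all $\sigma\in\group{B}$. Hence $\sigma(X)\in\mathfrak{m}_{X}$ and $\mathfrak{m}_{\sigma(X)}\subset\mathfrak{m}_{X}$; applying the same reasoning to $\sigma^{-1}$ and $\sigma(X)$ yields equality. The identity $\soc{X}=\soc{\sigma(X)}$ then follows from part (1) applied to both $X$ and $\sigma(X)$.

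The only point beyond pure definition-chasing is the correspondence between nilpotent ideals of $\mathfrak{b}$ and upper sets of $(\Delta^{+},\preceq)$, which uses non-vanishing of brackets along a chain of simple-root covers; everything else is bookkeeping, which is presumably why the author calls the lemma immediate. I do not anticipate any real obstacle.
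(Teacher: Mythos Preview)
Your proposal is correct and is precisely the routine unfolding the paper has in mind: the author does not give a proof at all, stating only that the lemma is ``immediate from the definitions,'' and your argument via the bijection between nilpotent ideals and upper sets of $(\Delta^{+},\preceq)$, together with $\group{B}$-stability of ideals, is exactly that unfolding.
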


\begin{lemma}\label{socle-ideal}
Let $\mathfrak{m}$ be a nilpotent ideal of $\mathfrak{b}$ and
$\mathfrak{m}^{\circ}$ be the union of all nilpotent ideals in $\mathfrak{b}$ 
contained strictly in $\mathfrak{m}$. Then
$$
\mathfrak{m}^{\bullet} = \{ X\in \mathfrak{n}\, ; \Delta_{\mathfrak{m}}^{+}\supset 
\supp( X) \supset \soc{\mathfrak{m}} \}
= \mathfrak{m} \setminus  \mathfrak{m}^{\circ}.
$$
is an open $\group{B}$-stable subset of $\mathfrak{m}$.
\end{lemma}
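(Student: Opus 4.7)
The plan is to establish the three assertions of the lemma---the two set equalities, the openness, and the $\group{B}$-stability---each by a short unwinding of definitions with Lemma~\ref{socle-element} as the only real input.

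For the first equality $\mathfrak{m}^\bullet = \{X \in \mathfrak{n} : \Delta_{\mathfrak{m}}^{+}\supset \supp(X) \supset \soc{\mathfrak{m}}\}$ I would argue by double inclusion. If $X \in \mathfrak{m}^\bullet$, then $\mathfrak{m}_X = \mathfrak{m}$, so Lemma~\ref{socle-element}(1) gives $\soc{X} = \soc{\mathfrak{m}}$ and $\Delta_X^+ = \Delta_{\mathfrak{m}}^+$, whence $\supp(X) \supset \soc{X} = \soc{\mathfrak{m}}$ and $\supp(X) \subset \Delta_X^+ = \Delta_{\mathfrak{m}}^+$. Conversely, assuming the double containment on the support, $\soc{X} \subset \supp(X) \subset \Delta_{\mathfrak{m}}^+$ forces $\Delta_X^+ \subset \Delta_{\mathfrak{m}}^+$ by upward-closedness of $\Delta_{\mathfrak{m}}^+$, while each $\beta \in \soc{\mathfrak{m}} \subset \supp(X)$ lies above some element of $\soc{X}$ and hence belongs to $\Delta_X^+$; taking upward closures gives $\Delta_{\mathfrak{m}}^+ \subset \Delta_X^+$, and Lemma~\ref{socle-element}(1) concludes $\mathfrak{m}_X = \mathfrak{m}$.

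For the second equality, I would observe that $X \in \mathfrak{m}^\circ$ means $X$ lies in some nilpotent ideal strictly contained in $\mathfrak{m}$; by minimality of $\mathfrak{m}_X$ this is equivalent to $\mathfrak{m}_X \subsetneq \mathfrak{m}$. Thus $X \in \mathfrak{m}\setminus \mathfrak{m}^\circ$ says $X \in \mathfrak{m}$ and $\mathfrak{m}_X = \mathfrak{m}$, i.e.\ $X \in \mathfrak{m}^\bullet$.

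Openness is then immediate from the middle description: $\mathfrak{m}^\bullet$ is cut out inside the linear subspace $\mathfrak{m} = \bigoplus_{\alpha \in \Delta_{\mathfrak{m}}^+} \mathfrak{g}_\alpha$ by the non-vanishing of the finitely many coordinate functions $c_\alpha$ for $\alpha \in \soc{\mathfrak{m}}$, a principal Zariski-open condition. Finally, $\group{B}$-stability is a direct consequence of Lemma~\ref{socle-element}(2), which states $\mathfrak{m}_{\sigma(X)} = \mathfrak{m}_X$ for all $\sigma \in \group{B}$. As the entire lemma is formal once Lemma~\ref{socle-element} is in hand, I anticipate no genuine obstacle; the only place warranting slight care is the bookkeeping in the double inclusion above.
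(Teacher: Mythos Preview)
Your argument is correct and matches the paper's approach: the paper simply declares Lemma~\ref{socle-ideal} to be ``immediate from the definitions'' and gives no further proof, with Lemma~\ref{socle-element} as the only input. Your write-up unwinds exactly those definitions in the expected way, and the double-inclusion bookkeeping you flag is handled correctly.
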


Let $\mathfrak{m}$ be a nilpotent ideal of $\mathfrak{b}$. Then it is completely determined
by $\soc{\mathfrak{m}}$. Being minimal elements of $\Delta_{\mathfrak{m}}^{+}$, 
the set $\soc{\mathfrak{m}}$ contains pairwise non comparable roots, also called an
\highlight{antichain}. Conversely, any antichain $\soc{}$ in $\Delta^{+}$ defines 
a nilpotent ideal $\mathfrak{m}_{X}$ where $X$ is any element in $\mathfrak{n}$ 
verifying $\supp (X) = \soc{}$. Thus the map $\mathfrak{m}\mapsto \soc{\mathfrak{m}}$ 
is a bijection between the set of nilpotent ideals in $\mathfrak{b}$ and the set of antichains
in $\Delta^{+}$.  

\begin{theorem}[Sommers \cite{So}]\label{antichain}
Let $\soc{}$ be an antichain in $\Delta^{+}$. There exists an element $w$ in the 
Weyl group of $\mathfrak{g}$ such that $w (\soc{} ) \subset \Pi$.
\end{theorem}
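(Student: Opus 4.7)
The plan is to move from the finite to the affine Weyl group. Let $\theta$ be the highest root of $\Delta$, let $\delta$ be the null root, and set $\widehat{\Delta}=\Delta+\mathbb{Z}\delta$ with affine simple roots $\widehat{\Pi}=\Pi\cup\{\delta-\theta\}$; the affine Weyl group is $\widehat{W}=W\ltimes Q^{\vee}$, where $Q^{\vee}$ is the coroot lattice. The antichain condition on $\soc{\mathfrak{m}}$ is what makes the problem amenable to the combinatorics of this larger group, in which the rigid constraints on positive roots become transparent.

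I would invoke the Cellini--Papi correspondence between ad-nilpotent ideals $\mathfrak{m}\subset\mathfrak{b}$ and a distinguished class of elements $w_{\mathfrak{m}}\in\widehat{W}$, under which the antichain $\soc{\mathfrak{m}}$ is read off as the set of $\gamma\in\Delta^{+}$ for which $w_{\mathfrak{m}}^{-1}(\delta-\gamma)$ is an affine simple root. Decomposing $w_{\mathfrak{m}}=t_{\lambda}u$ with $u\in W$ and $\lambda\in Q^{\vee}$, the standard formulas for the $\widehat{W}$-action on $\widehat{\Delta}$ give
$$
w_{\mathfrak{m}}^{-1}(\delta-\gamma)=\bigl(1-\langle\gamma,\lambda\rangle\bigr)\delta-u^{-1}(\gamma),
$$
so the affine simplicity condition forces $u^{-1}(\gamma)\in-\Pi\cup\{\theta\}$ for each $\gamma\in\soc{\mathfrak{m}}$. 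Composing with the longest Weyl group element $w_{0}$ (which sends $\Pi$ to $-\Pi$ and $\theta$ to $-\theta$) and, if necessary, a further controlled finite Weyl reflection to absorb the residual $\theta$-contribution, one obtains a single element $w\in W$ with $w(\soc{\mathfrak{m}})\subset\Pi$.

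The main obstacle is establishing the Cellini--Papi dictionary itself: intrinsically identifying the element $w_{\mathfrak{m}}\in\widehat{W}$ attached to $\mathfrak{m}$ and verifying that $w_{\mathfrak{m}}^{-1}$ picks out precisely $\soc{\mathfrak{m}}$ through affine simplicity. Once that correspondence is in place, the remainder is essentially bookkeeping on the semidirect product structure of $\widehat{W}$; the crucial payoff is that a \emph{single} element $w_{\mathfrak{m}}$ handles the entire antichain at once, which is exactly what produces one finite Weyl group element $w$ carrying $\soc{\mathfrak{m}}$ simultaneously into $\Pi$.
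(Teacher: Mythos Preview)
The paper does not prove Theorem~\ref{antichain}; it is stated with attribution to Sommers \cite{So} and used as a black box. So there is no ``paper's own proof'' to compare against---your sketch is really an attempt to reconstruct Sommers' original argument, and indeed it follows the same route he takes (affine Weyl group, Cellini--Papi encoding of ad-nilpotent ideals).

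That said, there is a genuine gap at the end. Your computation correctly yields $u^{-1}(\gamma)\in -\Pi\cup\{\theta\}$ for every $\gamma\in\soc{\mathfrak{m}}$, and at most one $\gamma$ can hit $\theta$. Applying $w_{0}$ then lands the antichain in $\Pi\cup\{-\theta\}$. The problem is the phrase ``a further controlled finite Weyl reflection to absorb the residual $\theta$-contribution'': this is not a single reflection, and it is not obvious. What you need is the statement that for any \emph{proper} subset $J$ of the nodes of the extended Dynkin diagram (i.e.\ of $\Pi\cup\{-\theta\}$), there exists $w\in W$ with $w(J)\subset\Pi$. This is a Borel--de~Siebenthal--type fact about maximal-rank subsystems and requires its own argument (or citation); without it your proof does not close. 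Sommers handles exactly this point, so if you are going to invoke the Cellini--Papi dictionary anyway, you should also invoke (or prove) this conjugacy step explicitly rather than wave at it.

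A smaller point: you should also verify, or cite precisely, the claim that under the Cellini--Papi correspondence the minimal roots $\soc{\mathfrak{m}}$ are exactly those $\gamma$ with $w_{\mathfrak{m}}^{-1}(\delta-\gamma)\in\widehat{\Pi}$. You flag this yourself as ``the main obstacle,'' and it is indeed where the substance lies; the rest, once the $\theta$-absorption lemma is supplied, really is bookkeeping.
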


\begin{remark}\label{antichain-remark}
We obtain immediately from Theorem \ref{antichain} that if
$\soc{}$ is a non empty antichain in $\Delta^{+}$, then the set $\Delta_{\soc{}} = \mathbb{Z} \soc{} \cap \Delta =
\mathbb{Q} \soc{} \cap \Delta$ is a reduced root system, $\soc{}$ is a set of simple roots of $\Delta_{\soc{}}$
and $\Delta_{\soc{}}^{+} = \Delta_{\soc{}} \cap \Delta = \mathbb{N} \soc{} \cap \Delta$
is the set of positive roots with respect to $\soc{}$.
\end{remark}

\section{Invariant functions and closed $\group{B}$-stable subsets}\label{sec:uniqueness}

Let $\mathfrak{m}$ be a nilpotent ideal in $\mathfrak{b}$. 
By Remark \ref{antichain-remark}, $\Delta_{\soc{\mathfrak{m}}}$ is a reduced root system. Therefore
$
\mathfrak{g}_{0} = \mathfrak{h} \oplus \bigoplus_{\alpha\in \Delta_{\soc{\mathfrak{m}}}}
\mathfrak{g}_{\alpha}
$
is a reductive Lie subalgebra in $\mathfrak{g}$. Let 
$$
\mathfrak{m}_{0} = \bigoplus_{\alpha\in \Delta_{\soc{\mathfrak{m}}}^{+}}
\mathfrak{g}_{\alpha} \, \hbox{ and } \,
\mathfrak{b}_{0} = \mathfrak{h}\oplus \mathfrak{m}_{0}
$$
which is a Borel subalgebra of $\mathfrak{g}_{0}$ whose nilradical is 
$\mathfrak{m}_{0}$.
Denote by $\group{B}_{0}$ the closed connected subgroup of $\group{G}$ 
whose Lie algebra is $\mathfrak{b}_{0}$.

Let $(\xi_{\alpha})_{\alpha\in \Delta_{{\mathfrak{m}}}^{+}}$ be the dual basis of 
the basis $(X_{\alpha})_{\alpha\in \Delta_{{\mathfrak{m}}}^{+}}$ of $\mathfrak{m}$.
The ring of regular functions on $\mathfrak{m}$ decomposes
as a direct sum 
$$
\mathbb{C} [\mathfrak{m}] = \mathbb{C} [\xi_{\alpha}, \alpha\in \Delta_{\mathfrak{m}}^{+}] = 
\bigoplus_{\alpha\in -\mathbb{N}\Pi} \mathbb{C} [\mathfrak{m}]_{\alpha}
$$
where 
$\mathbb{C} [\mathfrak{m}]_{\alpha}$ denotes
the vector space span of monomials  $\xi_{\alpha_{1}}\cdots \xi_{\alpha_{r}}$ where
$r\in\mathbb{N}$, $\alpha_{1},\dots ,\alpha_{r}\in\Delta_{\mathfrak{m}}^{+}$ and
$\alpha_{1}+\cdots + \alpha_{r} = -\alpha$. Since $H ( \xi_{\alpha} ) = -\alpha (H) \xi_{\alpha}$
for any $H\in\mathfrak{h}$, 
$\mathbb{C} [\mathfrak{m}]_{\alpha}$ is also the $\mathfrak{h}$-weight 
subspace of $\mathbb{C} [\mathfrak{m}]$ of weight $\alpha$.

Note also that for $\alpha,\beta \in \Delta_{\mathfrak{m}}^{+}$, 
$X_{\alpha} ( \xi_{\beta} )$ is a non zero multiple of $\xi_{\beta-\alpha}$ if  
$\beta - \alpha \in \Delta_{\mathfrak{m}}^{+}$, and is zero otherwise.

\begin{proposition}\label{invariant-prop}
We have
$$
\bigoplus_{\alpha\in -\mathbb{N}\soc{\mathfrak{m}}} \mathbb{C} [\mathfrak{m}]_{\alpha}^{\mathfrak{n}}
=
\bigoplus_{\alpha\in -\mathbb{N}\soc{\mathfrak{m}}} \mathbb{C} [\mathfrak{m}]_{\alpha}^{\mathfrak{m}_{0}}
= \mathbb{C}[ \xi_{\alpha} , \alpha \in \soc{\mathfrak{m}}].
$$
\end{proposition}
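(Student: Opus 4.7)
The plan is to prove all three sets equal via the chain of inclusions $\mathbb{C}[\xi_{\gamma}, \gamma \in \soc{\mathfrak{m}}] \subset \bigoplus_{\alpha} \mathbb{C}[\mathfrak{m}]^{\mathfrak{n}}_{\alpha} \subset \bigoplus_{\alpha} \mathbb{C}[\mathfrak{m}]^{\mathfrak{m}_{0}}_{\alpha} \subset \mathbb{C}[\xi_{\gamma}, \gamma \in \soc{\mathfrak{m}}]$, with all weight-space sums indexed by $\alpha \in -\mathbb{N}\soc{\mathfrak{m}}$. The first inclusion is immediate: each $\gamma \in \soc{\mathfrak{m}}$ is minimal in $\Delta_{\mathfrak{m}}^{+}$, so $\gamma - \eta \notin \Delta_{\mathfrak{m}}^{+}$ for every $\eta \in \Delta^{+}$, hence $X_{\eta}(\xi_{\gamma}) = 0$; products of the $\xi_{\gamma}$'s therefore lie in $\mathbb{C}[\mathfrak{m}]^{\mathfrak{n}}$ and have weights in $-\mathbb{N}\soc{\mathfrak{m}}$. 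The middle inclusion holds because $\mathfrak{m}_{0} \subset \mathfrak{n}$.

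The substantive step is the third inclusion $\bigoplus_{\alpha \in -\mathbb{N}\soc{\mathfrak{m}}} \mathbb{C}[\mathfrak{m}]^{\mathfrak{m}_{0}}_{\alpha} \subset \mathbb{C}[\xi_{\gamma}, \gamma \in \soc{\mathfrak{m}}]$, which I would handle in two stages. \emph{Stage 1} is a combinatorial reduction: any weight vector in $\mathbb{C}[\mathfrak{m}]$ of weight $\alpha \in -\mathbb{N}\soc{\mathfrak{m}}$ already lies in $\mathbb{C}[\mathfrak{m}_{0}] = \mathbb{C}[\xi_{\beta}, \beta \in \Delta_{\soc{\mathfrak{m}}}^{+}]$. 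The key assertion is that if $\beta_{1}, \dots, \beta_{r} \in \Delta_{\mathfrak{m}}^{+}$ and $\beta_{1} + \cdots + \beta_{r} \in \mathbb{N}\soc{\mathfrak{m}}$, then each $\beta_{i} \in \Delta_{\soc{\mathfrak{m}}}^{+}$. I would invoke Theorem~\ref{antichain} to pick $w$ in the Weyl group with $w(\soc{\mathfrak{m}}) \subset \Pi$; the hypothesis becomes $\sum w(\beta_{i}) \in \mathbb{N} w(\soc{\mathfrak{m}}) \subset \mathbb{N}\Pi$, and provided $w(\Delta_{\mathfrak{m}}^{+}) \subset \Delta^{+}$ each $w(\beta_{i}) \in \mathbb{N}\Pi$; comparing the coefficient of each $\pi \notin w(\soc{\mathfrak{m}})$ on both sides and using non-negativity forces $[w(\beta_{i})]_{\pi} = 0$, so $w(\beta_{i}) \in \mathbb{N} w(\soc{\mathfrak{m}})$ and $\beta_{i} \in \Delta_{\soc{\mathfrak{m}}}^{+}$ by Remark~\ref{antichain-remark}. \emph{Stage 2} is the ``reductive case'' $\mathbb{C}[\mathfrak{m}_{0}]^{\mathfrak{m}_{0}} = \mathbb{C}[\xi_{\gamma}, \gamma \in \soc{\mathfrak{m}}]$ for the Borel nilradical $\mathfrak{m}_{0}$ of the reductive subalgebra $\mathfrak{g}_{0}$, which I would prove by induction on $\height_{\soc{\mathfrak{m}}}(-\alpha)$: given an invariant $f$ involving some $\xi_{\beta}$ with $\beta \in \Delta_{\soc{\mathfrak{m}}}^{+} \setminus \soc{\mathfrak{m}}$, pick $\gamma \in \soc{\mathfrak{m}}$ with $\beta - \gamma \in \Delta_{\soc{\mathfrak{m}}}^{+}$; then $X_{\gamma} \cdot f$ contains a nonzero $\xi_{\beta - \gamma}$-contribution, and its vanishing together with the induction hypothesis forces the coefficient of $\xi_{\beta}$ in $f$ to vanish.

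The main obstacle I anticipate is in Stage~1: Theorem~\ref{antichain} does not guarantee the auxiliary property $w(\Delta_{\mathfrak{m}}^{+}) \subset \Delta^{+}$ that I used to run the simple-root coefficient argument. One must either strengthen the selection of $w$ (perhaps by extracting it from the inductive construction in Sommers' original proof) or bypass the issue through a direct convexity argument, producing a linear functional on $\mathfrak{h}^{*}$ that vanishes on $\soc{\mathfrak{m}}$, is non-negative on $\Delta_{\mathfrak{m}}^{+}$, and is strictly positive on the ``bad'' roots $\Delta_{\mathfrak{m}}^{+} \setminus \Delta_{\soc{\mathfrak{m}}}^{+}$; this latter approach would anticipate Theorem~\ref{antichain-hyperplane} from the next section.
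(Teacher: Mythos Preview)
Your Stage 1 assertion is false, not merely hard to prove. Take $\mathfrak{g}$ of type $B_{3}$ and $\soc{\mathfrak{m}}=\{\alpha_{1},\,\alpha_{2}+\alpha_{3}\}$. Then $\beta_{1}=\alpha_{1}+\alpha_{2}$ and $\beta_{2}=\alpha_{2}+2\alpha_{3}$ both lie in $\Delta_{\mathfrak{m}}^{+}\setminus\Delta_{\soc{\mathfrak{m}}}^{+}$, yet
\[
\beta_{1}+\beta_{2}=\alpha_{1}+2\alpha_{2}+2\alpha_{3}=\alpha_{1}+2(\alpha_{2}+\alpha_{3})\in\mathbb{N}\soc{\mathfrak{m}}.
\]
So the monomial $\xi_{\beta_{1}}\xi_{\beta_{2}}$ has weight in $-\mathbb{N}\soc{\mathfrak{m}}$ but does not belong to $\mathbb{C}[\mathfrak{m}_{0}]$. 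Both of your suggested repairs collapse against this example: for any $w$ sending $\soc{\mathfrak{m}}$ into $\Pi$ one of $w(\beta_{1}),w(\beta_{2})$ is negative, and no linear functional can vanish on $\soc{\mathfrak{m}}$ while being strictly positive on both $\beta_{1}$ and $\beta_{2}$ (their sum lies in $\mathbb{N}\soc{\mathfrak{m}}$).

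The paper avoids this trap by never claiming that the weight restriction alone forces membership in $\mathbb{C}[\mathfrak{m}_{0}]$. Instead it splits $\mathbb{C}[\mathfrak{m}]^{\mathfrak{m}_{0}}=\mathbb{C}[\mathfrak{m}_{0}]^{\mathfrak{m}_{0}}\oplus\ker(\iota^{*})^{\mathfrak{m}_{0}}$ and argues directly that the second summand has no nonzero weight vector with weight in $-\mathbb{N}\soc{\mathfrak{m}}$: take such a vector $\varphi$ of maximal $\height_{\Pi}$, pick a $\prec$-maximal $\beta$ with $\xi_{\beta}$ occurring in $\varphi$, and observe that the leading $\xi_{\beta}$-coefficient is again $\mathfrak{m}_{0}$-invariant but of strictly larger $\height_{\Pi}$; maximality then forces this coefficient into $\mathbb{C}[\mathfrak{m}_{0}]^{\mathfrak{m}_{0}}$, whence $\beta\in\mathbb{Q}\soc{\mathfrak{m}}\cap\Delta_{\mathfrak{m}}^{+}=\Delta_{\soc{\mathfrak{m}}}^{+}$, contradicting $\varphi\in\ker(\iota^{*})$. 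The $\mathfrak{m}_{0}$-invariance is used at every step of this descent, which is precisely what your purely combinatorial reduction lacks. For Stage~2 the paper takes a different route from your sketch as well: rather than an explicit induction it invokes Richardson's dense $\group{B}_{0}$-orbit in $\mathfrak{m}_{0}$ and Rosenlicht's theorem to get $\dim\mathbb{C}[\mathfrak{m}_{0}]^{\mathfrak{m}_{0}}_{\alpha}\leqslant 1$ for every weight, after which the inclusion $\mathbb{C}[\xi_{\gamma},\gamma\in\soc{\mathfrak{m}}]\subset\mathbb{C}[\mathfrak{m}_{0}]^{\mathfrak{m}_{0}}$ finishes the job.
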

\begin{proof}
Let 
$$
\mathfrak{c} = \bigoplus_{\alpha\in\Delta_{\mathfrak{m}}^{+} \setminus \Delta_{\soc{\mathfrak{m}}}^{+}}
\mathfrak{g}_{\alpha}.
$$
Thus we have $\mathfrak{m} = \mathfrak{m}_{0} \oplus \mathfrak{c}$ as $\mathfrak{b}_{0}$-modules.
As the composition of the natural morphisms of $\mathfrak{b}_{0}$-modules
$$
\mathfrak{m}_{0}\stackrel{\iota}{\longrightarrow} \mathfrak{m} =\mathfrak{m}_{0} \oplus \mathfrak{c} 
\stackrel{\pi}{\longrightarrow} \mathfrak{m}_{0}
$$
is the identity map, the composition of the comorphisms
$$
\mathbb{C}[ \mathfrak{m}_{0}] \stackrel{\pi^{*}}{\longrightarrow} 
\mathbb{C}[ \mathfrak{m}] \stackrel{\iota^{*}}{\longrightarrow} \mathbb{C}[\mathfrak{m}_{0}]
$$
is also the identity map. We shall identify $\mathbb{C}[\mathfrak{m}_{0}]$ with its image under $\pi^{*}$ 
which is precisely $\mathbb{C}[ \xi_{\alpha} , \alpha \in \Delta_{\soc{\mathfrak{m}}}^{+}]$. Hence 
$$
\mathbb{C}[ \mathfrak{m}]  = \mathbb{C}[ \mathfrak{m}_{0}] \oplus \ker (\iota^{*})
$$
as $\mathfrak{b}_{0}$-modules. Consequently, $\mathbb{C}[ \mathfrak{m}]^{\mathfrak{m}_{0}}
= \mathbb{C}[ \mathfrak{m}_{0}]^{\mathfrak{m}_{0}} \oplus \ker (\iota^{*})^{\mathfrak{m}_{0}}$.

Suppose that $\ker (\iota^{*})^{\mathfrak{m}_{0}}_{\lambda} \neq 0$ for some 
$\lambda\in -\mathbb{N}\soc{\mathfrak{m}}$. Fix such a 
$\lambda = \sum_{\alpha\in\Pi} c_{\alpha}\alpha$ such that $\height_{\Pi} (\lambda )$
is maximal. Let 
$\varphi \in \ker (\iota^{*})^{\mathfrak{m}_{0}}_{\lambda}$ be 
non zero. There exist $\beta\in \Delta_{\mathfrak{m}}^{+}$ and $n > 0$ such that
$$
\varphi = \psi_{n} \xi_{\beta}^{n} + \psi_{n-1} \xi_{\beta}^{n-1} + \cdots  + \psi_{1} \xi_{\beta} + \psi_{0}
$$
where $\psi_{k} \in \mathbb{C}[ \xi_{\alpha} , \alpha \not\succcurlyeq \beta]_{\lambda + k \beta}$
and $\psi_{n}\neq 0$. 

For any $Z \in \mathfrak{m}_{0}$, we have
$$
0 = Z ( \varphi )  = Z ( \psi_{n}) \xi_{\beta}^{n} + \eta
$$
where by our choice of $\beta$, the degree of $\eta$ in $\xi_{\beta}$ is at most $n-1$. 
So $Z (\psi_{n} ) = 0$.
Thus $\psi_{n} \in \mathbb{C}[\mathfrak{m}]^{\mathfrak{m}_{0}}_{\lambda + n\beta}$, and our choice
of $\lambda$ implies that $\psi_{n} \in \mathbb{C}[\mathfrak{m}_{0}]^{\mathfrak{m}_{0}}$, and hence
$\lambda + n\beta \in -\mathbb{N}\soc{\mathfrak{m}}$. 
It follows that 
$\beta \in \mathbb{Q} \soc{\mathfrak{m}}\cap \Delta_{\mathfrak{m}}^{+} = 
\mathbb{N}\soc{\mathfrak{m}}\cap \Delta_{\mathfrak{m}}^{+} = \Delta_{\soc{\mathfrak{m}}}^{+}$
(Remark \ref{antichain-remark}), and 
so $\psi_{n}\xi_{\beta}^{n}\in \mathbb{C}[\mathfrak{m}_{0}]$
which is impossible since $\varphi \in \ker (\iota^{*})^{\mathfrak{m}_{0}}_{\lambda}$. 
We have therefore established that
$\bigoplus_{\alpha\in -\mathbb{N}\soc{\mathfrak{m}}} 
\mathbb{C} [\mathfrak{m}]_{\alpha}^{\mathfrak{m}_{0}} = 
\mathbb{C}[\mathfrak{m}_{0}]^{\mathfrak{m}_{0}}$. 

Now, we check readily that 
$\mathbb{C}[\xi_{\alpha} , \alpha \in \soc{\mathfrak{m}}]$ is contained in both 
$\mathbb{C}[\mathfrak{m}_{0}]^{\mathfrak{m}_{0}}$ and 
$\mathbb{C}[\mathfrak{m}]^{\mathfrak{n}}$.
We claim that 
$\mathbb{C}[\mathfrak{m}_{0}]^{\mathfrak{m}_{0}} = \mathbb{C}[\xi_{\alpha} , \alpha \in \soc{\mathfrak{m}}]$.

Let us prove our claim. Observe that the ring of $\mathfrak{m}_{0}$-invariant regular functions on 
$\mathfrak{m}_{0}$ is the ring of $\mathfrak{b}_{0}$-semi-invariant regular functions on $\mathfrak{m}_{0}$.
By Richardson's dense orbit theorem \cite{R}, $\mathfrak{m}_{0}$ contains an open $\group{B}_{0}$-orbit. 
It follows from a theorem of Rosenlicht \cite{Ro} that the field of $\group{B}_{0}$-invariant rational functions 
is $\mathbb{C}$. This implies that $\dim \mathbb{C}[\mathfrak{m}_{0}]^{\mathfrak{m}_{0}}_{\alpha} \leqslant 1$
for any $\alpha \in -\mathbb{N}\soc{\mathfrak{m}}$. Since  
$\mathbb{C}[\xi_{\alpha} , \alpha \in \soc{\mathfrak{m}}] \subset 
\mathbb{C}[\mathfrak{m}_{0}]^{\mathfrak{m}_{0}}$,
we have our claim.

We deduce immediately from our claim the required equalities since 
$\mathbb{C}[\mathfrak{m}_{0}]^{\mathfrak{m}_{0}} = 
\mathbb{C}[\xi_{\alpha} , \alpha \in \soc{\mathfrak{m}}] 
\subset \mathbb{C}[\mathfrak{m}]^{\mathfrak{n}} \subset
\mathbb{C} [\mathfrak{m}]^{\mathfrak{m}_{0}}$. 
\end{proof}

\begin{theorem}\label{intersection-theorem}
The intersection of two non empty closed $\group{B}$-stable subsets of $\mathfrak{m}^{\bullet}$
is non empty.
\end{theorem}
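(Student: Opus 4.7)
The plan is to argue by contradiction, using the $\group{B}$-semi-invariant
$$
f := \prod_{\alpha \in \soc{\mathfrak{m}}} \xi_{\alpha} \in \mathbb{C}[\mathfrak{m}]
$$
of $\group{T}$-weight $-\sigma$, where $\sigma := \sum_{\alpha \in \soc{\mathfrak{m}}} \alpha$. By Lemma \ref{socle-ideal}, $f$ vanishes precisely on $\mathfrak{m}^{\circ} = \mathfrak{m}\setminus\mathfrak{m}^{\bullet}$, and is nowhere zero on $\mathfrak{m}^{\bullet}$.

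Suppose $Y_1, Y_2 \subset \mathfrak{m}^{\bullet}$ are two non-empty closed $\group{B}$-stable subsets with $Y_1 \cap Y_2 = \emptyset$. Let $\overline{Y_i}$ denote the closure of $Y_i$ in $\mathfrak{m}$; these are closed $\group{B}$-stable subvarieties of $\mathfrak{m}$, and since $Y_i = \overline{Y_i} \cap \mathfrak{m}^{\bullet}$, the disjointness hypothesis forces $\overline{Y_1} \cap \overline{Y_2} \subset \mathfrak{m}^{\circ}$, on which $f$ vanishes. Writing $I_i := I(\overline{Y_i}) \subset \mathbb{C}[\mathfrak{m}]$ for the corresponding radical $\group{B}$-stable ideals, the Nullstellensatz gives $f \in \sqrt{I_1 + I_2}$, hence $f^N = g_1 + g_2$ for some integer $N\geq 1$ and some $g_i \in I_i$. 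Since each $I_i$ is $\mathfrak{h}$-stable, I may project this identity onto the $\mathfrak{h}$-weight space of weight $-N\sigma$ and replace each $g_i$ by its weight-$(-N\sigma)$ component; then $f^N = g_1 + g_2$ still holds and at least one, say $g_1$, remains non-zero.

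The decisive step is to apply Proposition \ref{invariant-prop}: the $\mathfrak{n}$-invariants in $\mathbb{C}[\mathfrak{m}]$ of weight $-N\sigma$ lie in $\mathbb{C}[\xi_{\alpha} : \alpha \in \soc{\mathfrak{m}}]$, and by the linear independence of $\soc{\mathfrak{m}}$ (Remark \ref{antichain-remark}) the only monomial of weight $-N\sigma$ in these generators is $f^N$. Thus the $\mathfrak{n}$-invariants of weight $-N\sigma$ form the one-dimensional line $\mathbb{C} f^N$. Now $I_1 \cap \mathbb{C}[\mathfrak{m}]_{-N\sigma}$ contains $g_1 \neq 0$, is finite-dimensional, and is $\group{U}$-stable; unipotence of $\group{U}$ then produces a non-zero $\group{U}$-invariant inside it, which must be a non-zero scalar multiple of $f^N$. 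Hence $f^N \in I_1$; since $I_1$ is radical, $f \in I_1$, so $f$ vanishes on $\overline{Y_1} \supset Y_1$, contradicting the non-vanishing of $f$ on $\mathfrak{m}^{\bullet} \supset Y_1$.

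I expect the main obstacle to be recognizing that the proof rests on a delicate combination of two ingredients: Proposition \ref{invariant-prop}, which pins down the $\mathfrak{n}$-invariants of weight $-N\sigma$ to the single line $\mathbb{C} f^N$, together with the unipotence of $\group{U}$, which guarantees that any non-zero finite-dimensional $\group{U}$-stable subspace contains a non-zero $\group{U}$-invariant. Neither ingredient alone allows one to promote $f^N \in I_1 + I_2$ to $f^N \in I_1$ or $f^N \in I_2$; it is precisely their interaction at the prescribed weight that closes the argument.
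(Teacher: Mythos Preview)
Your overall strategy—contradiction via the semi-invariant $f$, projection to a single $\mathfrak{h}$-weight, and an appeal to unipotence together with Proposition~\ref{invariant-prop}—is close in spirit to the paper's. However, there is a genuine gap at the unipotence step: the space $I_{1}\cap\mathbb{C}[\mathfrak{m}]_{-N\sigma}$ is \emph{not} $\group{U}$-stable. Acting by $X_{\alpha}\in\mathfrak{n}$ on a vector of weight $-N\sigma$ produces a vector of weight $-N\sigma+\alpha$, which lies in a different weight space. For a concrete instance, take $\mathfrak{g}$ of type $A_{3}$ with $\soc{\mathfrak{m}}=\{\alpha_{1},\alpha_{2}+\alpha_{3}\}$, so $\sigma=\alpha_{1}+\alpha_{2}+\alpha_{3}$: then $\xi_{\alpha_{1}+\alpha_{2}+\alpha_{3}}\in\mathbb{C}[\mathfrak{m}]_{-\sigma}$, while $X_{\alpha_{3}}\cdot\xi_{\alpha_{1}+\alpha_{2}+\alpha_{3}}$ is a non-zero multiple of $\xi_{\alpha_{1}+\alpha_{2}}$, whose weight $-(\alpha_{1}+\alpha_{2})$ is neither $-\sigma$ nor even in $-\mathbb{N}\soc{\mathfrak{m}}$.

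The obvious repair—replace the weight space by the finite-dimensional $\group{U}$-submodule $V\subset I_{1}$ generated by $g_{1}$ and extract a non-zero $\group{U}$-fixed vector $h\in V$—runs into the same obstacle one step later: the weight components $h_{\lambda}$ of $h$ are indeed $\mathfrak{n}$-invariant elements of $I_{1}$, but their weights $\lambda$ lie in $-N\sigma+\mathbb{N}\Pi$ and need not belong to $-\mathbb{N}\soc{\mathfrak{m}}$. Proposition~\ref{invariant-prop} therefore does not apply, and you cannot conclude that $h_{\lambda}$ is a monomial in the $\xi_{\alpha}$, $\alpha\in\soc{\mathfrak{m}}$. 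This is precisely the difficulty the paper's proof is engineered to avoid: it works with the smaller unipotent group $\group{M}_{0}$ (whose roots lie in $\mathbb{N}\soc{\mathfrak{m}}$, so the weight shifts stay inside $\mathbb{Z}\soc{\mathfrak{m}}$), passes to the localization $\mathbb{C}[\mathfrak{m}^{\bullet}]$, and uses the $\group{B}_{0}$-equivariant splitting $\mathbb{C}[\mathfrak{m}^{\bullet}]=\mathbb{C}[\mathfrak{m}_{0}^{\bullet}]\oplus\ker(\iota^{*})$ to show that every proper $\group{B}$-stable ideal lies in $\ker(\iota^{*})$. Your argument can be salvaged along these lines, but as written the crucial step does not go through.
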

\begin{proof}
Let us conserve the notations in the proof of Proposition \ref{invariant-prop}.
Set $\group{M}_{0}$  to be the closed connected subgroup
of $\group{B}_{0}$ whose Lie algebra is $\mathfrak{m}_{0}$.
In particular, $\group{M}_{0}$ is the subgroup generated by $\exp (\mathrm{ad} (X))$
where $X\in \mathfrak{m}_{0}$.

By Proposition \ref{invariant-prop} and Lemma \ref{socle-ideal}, $\varphi = \prod_{\alpha \in \soc{\mathfrak{m}}} \xi_{\alpha} \in 
\mathbb{C}[\mathfrak{m}_{0}]^{\mathfrak{m}_{0}}$
and $\mathfrak{m}^{\bullet} = \{ X \in \mathfrak{m} ; \varphi (X) \neq 0\}$. 
So
$\mathbb{C}[\mathfrak{m}^{\bullet}]$ is the localization
of $\mathbb{C}[\mathfrak{m}]$ by the multiplicative subset generated by $\varphi$.

Let $\mathfrak{m}_{0}^{\bullet} = \mathfrak{m}_{0} \cap \mathfrak{m}^{\bullet}$. Similarly, 
$\mathbb{C}[\mathfrak{m}_{0}^{\bullet} ]$  is the localization 
of $\mathbb{C}[\mathfrak{m}_{0}]$ by the multiplicative subset generated by $\varphi$.
As in the proof of Proposition \ref{invariant-prop}, we obtain from the composition 
$
\mathfrak{m}_{0}^{\bullet} \stackrel{\iota}{\rightarrow} \mathfrak{m}^{\bullet} 
\stackrel{\pi}{\rightarrow} \mathfrak{m}_{0}^{\bullet}
$
of $\group{B}_{0}$-equivariant morphisms that 
$
\mathbb{C}[\mathfrak{m}^{\bullet} ]= \mathbb{C}[\mathfrak{m}_{0}^{\bullet} ]
\oplus \ker (\iota^{*})
$
as $\group{B}_{0}$-modules. 

Let $J$ be a proper $\group{B}$-stable ideal in $\mathbb{C}[\mathfrak{m}^{\bullet} ]$.
Then $J + \ker (\iota^{*})$ is $\group{B}_{0}$-stable, and hence
$
(J + \ker (\iota^{*}) ) /  \ker (\iota^{*}) \simeq \iota^{*} ( J )
$
is a $\group{B}_{0}$-stable ideal in $\mathbb{C}[\mathfrak{m}_{0}^{\bullet} ]$.

As mentioned in the proof of Proposition \ref{invariant-prop},
$\mathfrak{m}_{0}$ contains an open $\group{B}_{0}$-orbit. Since $\mathfrak{m}_{0}^{\bullet}$
is open in $\mathfrak{m}_{0}$ (Lemma \ref{socle-ideal}), we deduce that either $\iota^{*} ( J ) = 0$ or 
$\iota^{*} ( J ) = \mathbb{C}[\mathfrak{m}_{0}^{\bullet} ]$.

Suppose that $\iota^{*} ( J ) = \mathbb{C}[\mathfrak{m}_{0}^{\bullet} ]$.
Then there exists $(f,g) \in J_{0} \times \ker (\iota^{*})_{0}$ such that 
$f+g =1$. In particular, $f$ and $g$ are non constant since $J$ is proper.

Let $V$ be the $\group{M}_{0}$-submodule generated by $f$ in $\mathbb{C}[\mathfrak{m}^{\bullet} ]$. 
Then $V$ is finite-dimensional, and we have $V^{\group{M}_{0}}\neq 0$ because $\group{M}_{0}$ is unipotent. 
Let $\sigma_{1},\dots ,\sigma_{r}\in \group{M}_{0}$ and $c_{1},\dots ,c_{r}\in \mathbb{C}^{*}$
be such that 
$\tilde{f} = \sum_{i=1}^{r} c_{i}\sigma_{i}( f )$ is a non zero element of $V^{\group{M}_{0}}= V^{\mathfrak{m}_{0}}$.

Set $\tilde{g} = \sum_{i=1}^{r} c_{i}\sigma_{i}( g)$. Then $\tilde{f} + \tilde{g} = \sum_{i=1}^{r} c_{i}$.
As $J$ and $\ker (\iota^{*})$ are $\group{B}_{0}$-stable, we deduce that $\tilde{f} \in J^{\mathfrak{m}_{0}}$, and 
hence $\tilde{g}\in \ker (\iota^{*})^{\mathfrak{m}_{0}}$. Moreover,  $\tilde{f}$ and $\tilde{g}$ are both non constant
because $J$ is proper, and by construction, they are sums of vectors of weights
in $-\mathbb{N}\soc{\mathfrak{m}}$ because $f$ and $g$ are of weight $0$. 

Since $\tilde{g}\in \ker (\iota^{*})^{\mathfrak{m}_{0}}$ and 
$\varphi \in \mathbb{C}[\mathfrak{m}]^{\mathfrak{m}_{0}}$
(Proposition \ref{invariant-prop}), there exists $n>0$ such that $\varphi^{n}\tilde{g}\in 
\bigoplus_{\alpha\in -\mathbb{N}\soc{\mathfrak{m}}} \mathbb{C}[\mathfrak{m}]_{\alpha}^{\mathfrak{m}_{0}}
=\mathbb{C}[\xi_{\alpha} , \alpha\in \soc{\mathfrak{m}}]$ (Proposition \ref{invariant-prop}). 
This implies that $\tilde{g}\in \mathbb{C}[\mathfrak{m}_{0}^{\bullet}]$ which is absurd because $\tilde{g}\in
\ker (\iota^{*})$ and $\tilde{g}\neq 0$. 

We conclude that $\iota^{*}( J) = 0$, and hence $J \subset \ker (\iota^{*}) \neq \mathbb{C}[\mathfrak{m}^{\bullet}]$.

Consequently, the sum of two proper $\group{B}$-stable ideals can never be $\mathbb{C}[\mathfrak{m}^{\bullet}]$.
So the intersection of two non empty closed $\group{B}$-stable subsets of $\mathfrak{m}^{\bullet}$ is 
non empty.
\end{proof}

\begin{corollary}\label{main-corollary}
There is a unique closed $\group{B}$-orbit in $\mathfrak{m}^{\bullet}$.
\end{corollary}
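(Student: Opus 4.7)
The proof is essentially a direct corollary of Theorem \ref{intersection-theorem}; there is almost no new content, and I expect no genuine obstacle. I will split the argument into existence of a closed $\group{B}$-orbit and uniqueness.

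For existence, the plan is to observe that $\mathfrak{m}^{\bullet}$ is a non-empty $\group{B}$-variety (it contains for instance $\sum_{\alpha\in \soc{\mathfrak{m}}} X_{\alpha}$), and then apply the standard Noetherian argument: pick any $X\in \mathfrak{m}^{\bullet}$ and let $Y = \overline{\group{B}\cdot X}\cap \mathfrak{m}^{\bullet}$, which is $\group{B}$-stable and closed in $\mathfrak{m}^{\bullet}$. Among the $\group{B}$-stable closed subsets of $\mathfrak{m}^{\bullet}$ contained in $Y$, Noetherianity yields a minimal non-empty one $Z$. For any $x \in Z$, the set $\overline{\group{B}\cdot x}\cap \mathfrak{m}^{\bullet}$ is $\group{B}$-stable and closed in $\mathfrak{m}^{\bullet}$, hence equals $Z$ by minimality; moreover $\group{B}\cdot x$ is open in its closure in $\mathfrak{m}^{\bullet}$, so its complement in $Z$ is again a $\group{B}$-stable closed subset of $Z$ and must be empty. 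Thus $Z = \group{B}\cdot x$ is a closed $\group{B}$-orbit in $\mathfrak{m}^{\bullet}$.

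For uniqueness, suppose $\mathcal{O}_{1}$ and $\mathcal{O}_{2}$ are two closed $\group{B}$-orbits in $\mathfrak{m}^{\bullet}$. Each is a non-empty closed $\group{B}$-stable subset of $\mathfrak{m}^{\bullet}$, so Theorem \ref{intersection-theorem} yields $\mathcal{O}_{1}\cap \mathcal{O}_{2}\neq \emptyset$. Since two $\group{B}$-orbits are either disjoint or equal, this forces $\mathcal{O}_{1} = \mathcal{O}_{2}$, giving the claimed uniqueness.

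In summary, once Theorem \ref{intersection-theorem} is in hand, the corollary is a one-line consequence combined with the elementary existence argument above; no further calculations or tools (beyond Noetherianity of $\mathfrak{m}$) are needed.
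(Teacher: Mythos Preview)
Your proof is correct and follows essentially the same approach as the paper: uniqueness is obtained from Theorem \ref{intersection-theorem} exactly as you do, and for existence the paper simply remarks that any $\group{B}$-orbit of minimal dimension is closed, which is the standard companion to your Noetherian argument.
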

\begin{proof}
Any $\group{B}$-orbit of minimal dimension is closed, so a closed $\group{B}$-orbit exists and
Theorem \ref{intersection-theorem} says that it is unique. 
\end{proof}

\section{Description of the unique closed orbit}\label{sec:description}

We shall show in this section that the unique closed $\group{B}$-orbit in Corollary \ref{main-corollary}
is the orbit of any element in $\mathfrak{m}$
whose support is $\soc{\mathfrak{m}}$. Our approach is to use the
following geometrical property concerning antichains of positive roots to study 
the action of the torus $\group{T}$ on the projective variety $\mathbb{P}(\mathfrak{m})$.

\begin{theorem}\label{antichain-hyperplane}
Let $\Gamma \subset \Delta^{+}$ be a non empty antichain. 
There exists $(H,n) \in \mathfrak{h}\times \mathbb{N}^{*}$ such that 
$\alpha (H) \in \mathbb{N}^{*}$ for all $\alpha \in \Pi$ and 
$\gamma  (H) = n$ 
for all $\gamma \in \Gamma$.
\end{theorem}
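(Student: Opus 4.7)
My plan is to translate the statement into a convex-feasibility question about positive linear combinations of fundamental coweights, and then to use Sommers' Theorem \ref{antichain} together with Farkas-type duality to establish feasibility.

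Writing $H = \sum_{\alpha \in \Pi} t_\alpha \varpi_\alpha^\vee$ in the basis of fundamental coweights of $\mathfrak{g}$, one has $\alpha(H) = t_\alpha$ for $\alpha \in \Pi$ and $\gamma(H) = \sum_\alpha c_{\gamma,\alpha} t_\alpha$, where $c_{\gamma,\alpha} \in \mathbb{N}$ is the coefficient of $\alpha$ in $\gamma$. Since any positive rational solution can be cleared of denominators to yield a positive integer one, the theorem reduces to finding $(t_\alpha) \in \mathbb{R}_{>0}^{\Pi}$ with $\sum_\alpha c_{\gamma,\alpha} t_\alpha = 1$ for every $\gamma \in \Gamma$; equivalently, the vector $\mathbf{1} \in \mathbb{R}^\Gamma$ must lie in the open cone generated by the columns $M_\alpha = (c_{\gamma,\alpha})_{\gamma \in \Gamma}$ of the incidence matrix. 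By Farkas-type convex duality, this is the assertion that no non-zero $(a_\gamma) \in \mathbb{R}^\Gamma$ satisfies simultaneously $\mu := \sum_\gamma a_\gamma \gamma \in \mathbb{R}_{\geq 0}\Pi$ and $\sum_\gamma a_\gamma \leq 0$.

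To verify this, I would apply Sommers' Theorem \ref{antichain} to fix $w \in W$ with $w\Gamma \subset \Pi$. Expanding $w\mu = \sum_\gamma a_\gamma (w\gamma)$ on $\Pi$ gives a vector supported in $w\Gamma$ with coefficient $a_\gamma$ on the simple root $w\gamma$, so that $\height_\Pi(w\mu) = \sum_\gamma a_\gamma = \height_\Gamma(\mu)$. Decomposing $(a_\gamma)$ into its positive and negative parts yields $\mu = \mu^+ - \mu^-$, where $\mu^\pm$ are non-negative combinations of disjoint subsets of the antichain; the hypotheses translate to the $\Pi$-order comparison $\mu^+ \succcurlyeq \mu^-$ together with $\height_\Gamma(\mu^+) \leq \height_\Gamma(\mu^-)$, and I would try to derive a contradiction from Remark \ref{antichain-remark}, which says that $\Gamma$ is a simple system of the reduced subsystem $\Delta_\Gamma$ with $\Delta_\Gamma^+ = \mathbb{N}\Gamma \cap \Delta$.

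The main obstacle will be carrying out this last step cleanly. As a fall-back, one can try a direct construction: the sum $H_0 = \sum_{\gamma \in \Gamma} \omega_\gamma^{\Gamma,\vee}$ of the fundamental coweights of $\Delta_\Gamma$, regarded as an element of $\mathrm{span}(\Gamma^\vee) \subset \mathfrak{h}$, already satisfies $\gamma(H_0) = 1$ for all $\gamma \in \Gamma$; one then corrects by an element of the $\Gamma$-annihilator $V_\Gamma^\perp = \{H \in \mathfrak{h} : \gamma(H) = 0 \text{ for all } \gamma \in \Gamma\}$ chosen to make every $\alpha(H)$ positive. The existence of such a correction is controlled by exactly the same Farkas condition, so both routes ultimately reduce to the same geometric input supplied by Sommers' theorem and the antichain property.
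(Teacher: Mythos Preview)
Your reduction to a Farkas-type alternative is correct and is a genuinely different route from the paper. Writing $H$ in the fundamental-coweight basis and using the linear independence of $\Gamma$ (so that $M^{T}$ is injective), the existence of the desired $(H,n)$ is indeed equivalent to the statement that every non-zero $\mu \in \mathrm{span}_{\mathbb{R}}(\Gamma) \cap \mathbb{R}_{\geq 0}\Pi$ has $\height_{\Gamma}(\mu) > 0$. The paper, by contrast, argues by induction on the rank: it disposes of $\sharp\Gamma \leq 2$ and of the reducible case directly, then for irreducible $\Delta$ of classical type gives explicit recursive constructions exploiting the concrete shape of the positive roots, and for the exceptional types resorts to a computer verification. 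A uniform argument via convex duality would therefore be a genuine improvement, and the paper's own remark that ``it would be nice to have a more geometric proof'' invites exactly this.

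However, the proposal has a real gap at precisely the step you flag as the ``main obstacle'', and neither of your two routes closes it. The identity $\height_{\Pi}(w\mu) = \sum_{\gamma} a_{\gamma}$ is correct, but Sommers' element $w$ does not send $\mathbb{R}_{\geq 0}\Pi$ into itself (any $w \neq 1$ maps some positive root to a negative one), so the hypothesis $\mu \in \mathbb{R}_{\geq 0}\Pi$ gives no control on the sign of $\height_{\Pi}(w\mu)$; one would need a compatibility between $w$ and the dominant cone that Theorem~\ref{antichain} simply does not supply. Your fall-back is, as you yourself observe, circular: taking $H_{0} = \sum_{\gamma} \omega_{\gamma}^{\Gamma,\vee}$ and correcting by an element of $V_{\Gamma}^{\perp}$ so that all $\alpha(H)$ become positive is feasible precisely when the same Farkas alternative holds. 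Finally, the decomposition $\mu = \mu^{+} - \mu^{-}$ with $\mu^{+} \succcurlyeq_{\Pi} \mu^{-}$ and $\height_{\Gamma}(\mu^{+}) \leq \height_{\Gamma}(\mu^{-})$ is not ruled out by Remark~\ref{antichain-remark}, which concerns only \emph{roots} in $\mathbb{Z}\Gamma$, not arbitrary real combinations; some further consequence of the antichain hypothesis is needed, and you have not supplied it. As it stands, the proposal is a correct and suggestive reformulation, but not yet a proof.
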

\begin{proof}
Let $\ell = \sharp\Pi$ be the rank of $\Delta$, $\Pi = \{ \alpha_{1},\dots ,\alpha_{\ell}\}$,
$(H_{1},\dots ,H_{\ell})$ the basis of $\mathfrak{h}$ whose dual basis is $\Pi$.
Given $\alpha = \sum_{i=1}^{\ell} c_{i}\alpha_{i}\in\Delta$, we set 
$\Pi_{\alpha} = \{ \alpha_{i} \in \Pi \, ; c_{i}\neq 0 \} = \{\alpha_{i} \in \Pi  \, ; \alpha (H_{i}) \neq 0 \}$.
\smallskip

We shall prove the theorem by induction on $\ell$.
If $\ell = 1$, then $\Gamma = \Pi$. So 
$( H_{1}, 1)$ verifies the required conditions. 
Let us suppose that $\ell > 1$.

\smallskip
\noindent 1) Suppose that $\sharp \Gamma \leqslant 2$.
\smallskip

If $\Gamma = \{\gamma\}$, then
$(H_{1}+\cdots + H_{\ell}, \height( \gamma ) )$ verifies the required conditions.

If $\Gamma = \{\gamma ,\gamma'\}$ where
$\gamma = \sum_{i=1}^{\ell} c_{i}\alpha_{i}$ and $\gamma' = \sum_{i=1}^{\ell} c_{i}'\alpha_{i}$
are distinct, set 
$$
I_{+} = \{ i \, ; c_{i} -c_{i}' > 0 \} \ , \
I_{0} = \{ i \, ; c_{i}  = c_{i}' \} \ , \
I_{-} = \{ i \, ; c_{i} -c_{i}' < 0 \} \ .
$$
Since $\gamma$ and $\gamma'$ are non comparable, both $I_{+}$ and $I_{-}$ are non empty. 
Let $c_{\pm} = \sum_{i\in I_{\pm}} c_{i}$,
$c_{\pm}' = \sum_{i\in I_{\pm}} c_{i}'$, 
$c = \sum_{i\in I_{0}} c_{i} = \sum_{i\in I_{0}} c_{i}'$ and
$$\begin{array}{c}
H = \sum\limits_{i\in I_{+}} (c_{-}' -c_{-}) H_{i} +  \sum\limits_{i\in I_{-}} (c_{+}- c_{+}') 
H_{i} + \sum\limits_{i\in I_{0}} H_{i}.
\end{array}$$
Since $c_{-}' > c_{-} > 0$ and $c_{+} > c_{+}' > 0$, 
$(H , c_{-}'c_{+} - c_{-} c_{+}'+ c )$ verifies the required conditions. 

\smallskip
\noindent 2) Suppose that $\Pi_{\Gamma} = \bigcup_{\gamma\in \Gamma} \Pi_{\gamma} \neq \Pi$.
\smallskip

Let $\mathfrak{h}_{\Gamma}
= \mathrm{Vect}(H_{i} , \alpha_{i}\in\Pi_{\Gamma})$. Being a subset
of $\Pi$, $\Pi_{\Gamma}$ is an antichain. 
If $\Pi_{\Gamma} \neq \Pi$, then
$\rank \Delta_{\Pi_{\Gamma}} < \rank \Delta$. By induction, there exists 
$(H',n) \in \mathfrak{h}_{\Gamma} \times \mathbb{N}^{*}$ verifying the required conditions
for $\Gamma$ as an antichain in $\Delta_{\Pi_{\Gamma}}^{+}$. The pair 
$( H' + \sum_{\alpha_{i}\in \Pi \setminus \Pi_{\Gamma}} H_{i} , n)$
verifies the required conditions.

\smallskip
\noindent 3) Suppose that $\Pi_{\Gamma} = \Pi$ and $\Delta$ is not irreducible.
\smallskip

There is a partition 
$\Pi_{1} \cup \cdots \cup \Pi_{r}$ of $\Pi$ 
such that $\Delta = \Delta_{\Pi_{1}}\cup \cdots \cup \Delta_{\Pi_{r}}$ is the disjoint union of irreductible 
root systems of strictly lower rank. As $\Pi_{\Gamma} = \Pi$, 
$\Gamma = ( \Gamma\cap \Delta_{\Pi_{1}} ) \cup \cdots \cup
( \Gamma\cap \Delta_{\Pi_{r}})$ is the disjoint union of non empty antichains.
By induction, for $1\leqslant i \leqslant r$, there exists 
$(K_{i} , n_{i}) \in \mathfrak{h}_{\Pi_{i}} \times \mathbb{N}^{*}$ 
verifying the required conditions for $\Gamma\cap \Delta_{\Pi_{i}}$ as an antichain in 
$\Delta_{\Pi_{i}}^{+}$.  Then $( \sum_{i=1}^{r} \frac{n_{1}\cdots n_{r}}{n_{i}} K_{i} , n_{1}\cdots n_{r})$ 
verifies the required conditions.

\smallskip
\noindent 4) We are left with situation where $\sharp \Gamma\geqslant 3$, $\Pi_{\Gamma} = \Pi$ and
$\Delta$ is irreducible.
\smallskip

We shall use the numbering of simple roots of irreducible root systems in \cite[Chapter 18]{TY}. 
Let $\Delta_{1}^{+} = \{\alpha \in\Delta^{+} ; \alpha_{1}\in \Pi_{\alpha} \}$,
$\Gamma_{1} = \Gamma \cap \Delta_{1}^{+}$ and $\Gamma' = \Gamma \setminus \Gamma_{1}$.
Since $\Pi_{\Gamma} = \Pi$, the set
$\Gamma_{1}$ is non empty. Let us fix an element $\gamma \in \Gamma_{1}$.

The main observation here is 
that when $\Gamma'$ is non empty, the integer $m = \min \{  k \, ; \alpha_{k} \in \Pi_{\Gamma'} \} >1$, 
and as $\Pi_{\Gamma'} \subset  \Pi (m) = \{ \alpha_{m},\dots ,\alpha_{\ell}\}$, we may apply 
the induction hypothesis on the non empty antichain $\Gamma'$ in $\Delta_{\Pi (m)}$.
\smallskip

\noindent$\triangleright$ Case 1 : $\Delta$ is of type $A_{\ell}$, $B_{\ell}$ or $C_{\ell}$.
\smallskip

We have the following properties for these irreducible root systems.

\begin{itemize}
\item[(P1)] For $1 \leqslant i < j \leqslant \ell$, let 
$\alpha_{i,j} = \alpha_{i} + \alpha_{i+1} + \cdots + \alpha_{j}$. 
Non simple positive roots of $\Delta$ are of the form :

Type $A_{\ell}$ : $\alpha_{i,j}$, where $1 \leqslant i < j \leqslant \ell$.

Type $B_{\ell}$ : $\alpha_{i,j}$ or $\alpha_{i,\ell} + \alpha_{j,\ell}$, where 
$1 \leqslant i < j \leqslant \ell$.

Type $C_{\ell}$ : $\alpha_{i,j}$ or $\alpha_{i,\ell} + \alpha_{j-1,\ell-1}$,
where $1 \leqslant i < j \leqslant \ell$.

\noindent In particular, for $\alpha\in\Delta^{+}$, $\Pi_{\alpha}$ consists of simple roots with 
consecutive indices, and we have either
$\alpha = \sum_{\beta\in \Pi_{\alpha}} \beta$ or 
$\alpha_{\ell} \in \Pi_{\alpha}$.
\end{itemize}

We obtain by (P1) that $\Delta_{1}^{+}$ is totally ordered with respect to $\prec$ in these cases.
So $\Gamma_{1} = \{ \gamma \}$. 
By induction, there exists $( H' , n' ) \in\mathfrak{h}_{\Pi (m)} \times \mathbb{N}^{*}$ 
verifying the required conditions for $\Gamma'$ as an antichain in $\Delta_{\Pi (m) }$.

Let us fix $\gamma'\in \Gamma'$ such that $\alpha_{m}\in\Pi_{\gamma'}$, and
write $\gamma = \beta + \delta$ where 
$\beta = \sum_{i=1}^{m-1} c_{i}\alpha_{i}$ and 
$\delta = \sum_{i=m}^{\ell} c_{i}\alpha_{i}$.
Since $\gamma$ and $\gamma'$ are non comparable, we check using (P1)
that $\gamma' - \delta$ is a non zero sum of simple roots in $\Pi (m)$. So
$0 \leqslant  \delta (H')  < \gamma'  (H') = n'$. 
Set 
$$\begin{array}{c}
H = \sum\limits_{i=1}^{m-1} (n' - \delta( H') ) H_{i} + \height_{\Pi} (\beta) H'.
\end{array}$$
Since $\alpha_{1}\in \Pi_{\gamma_{1}}$, we have $\height_{\Pi} (\beta ) \in \mathbb{N}^{*}$,
and therefore $(H , \height_{\Pi}(\beta ) n' )$ verifies the required conditions.
\smallskip

\noindent$\triangleright$ Case 2 : $\Delta$ is of type $D_{\ell}$, $\ell \geqslant 4$.
\smallskip

For $1\leqslant i < j \leqslant \ell-2$, let 
$\alpha_{i,j} = \alpha_{i} + \alpha_{i+1} + \cdots + \alpha_{j}$.

\begin{itemize}
\item[(P2)] 
Non simple positive roots in $\Delta$ are of the form 
$$
\alpha_{i,j}, \alpha_{k,\ell-2} + \alpha_{\ell-1},  \alpha_{k,\ell-2} + \alpha_{\ell},
\alpha_{k,\ell-2} + \alpha_{\ell-1} + \alpha_{\ell}, \alpha_{i,\ell-2} + \alpha_{j,\ell -2} + 
\alpha_{\ell-1} + \alpha_{\ell} 
$$
\end{itemize}
\noindent where $1\leqslant i < j \leqslant \ell-2$ and $1\leqslant k \leqslant \ell-2$. In particular, if
$\alpha = \sum_{i=1}^{\ell} c_{i} \alpha_{i}$ is a positive root, then 
$c_{i} \in \{ 0, 1,2 \}$ for all $i$, and if $c_{i} = 2$, then $2\leqslant i\leqslant \ell-2$ and
$c_{i-1} = c_{\ell-1} = c_{\ell} = 1$. Also, if $c_{\ell-1} = c_{\ell} = 1$, then $c_{\ell-2}\neq 0$.
\smallskip

The set $\Delta_{1}^{+}$ is not totally ordered
with respect to $\prec$. However, only $\alpha_{1,\ell-2} + \alpha_{\ell-1}$ 
and $\alpha_{1,\ell-2} + \alpha_{\ell}$ are non comparable in $\Delta_{1}^{+}$. So 
$\sharp \Gamma_{1} \leqslant 2$. This implies that $\Gamma'$ is non empty
because $\sharp \Gamma\geqslant 3$.

We shall adapt the arguments in Case 1 according to $\sharp\Gamma_{1}$.
\smallskip

\noindent $\circ$ Subcase (i) : $\sharp \Gamma_{1} = 1$. 
\smallskip

Let $\gamma'$, $\beta$, $\delta$ be as in Case 1.
If $\sharp (\Pi_{\gamma}\cap \{ \alpha_{\ell-1},\alpha_{\ell}\} ) \neq 1$, then by (P2),
$\gamma'-\delta$ is a non zero sum of simple roots of $\Pi (m)$. So we may apply 
the same arguments as in Case 1 to obtain $(H,n)$ verifying the required conditions.

Suppose that $\sharp (\Pi_{\gamma}\cap \{ \alpha_{\ell-1},\alpha_{\ell}\} ) = 1$.
Then either $\gamma = \alpha_{1,\ell-2}+\alpha_{\ell-1}$ or 
$\gamma = \alpha_{1,\ell-2}+\alpha_{\ell}$. By symmetry of the Dynkin diagram of $\Delta$,  
we may assume that $\gamma = \alpha_{1,\ell-2}+\alpha_{\ell-1}$. 

Since $\gamma$ and $\gamma'$ are non comparable, (P2) implies that 
$\alpha_{\ell} \in \Pi_{\gamma'}$. If $\alpha_{\ell-1} \in \Pi_{\gamma'}$, then
again by (P2), $\gamma' - \delta$ is a non zero sum of simple roots in $\Pi (m)$. So
we may apply the same arguments as in Case 1 to obtain $(H,n)$ 
verifying the required conditions.

Suppose now that $\alpha_{\ell-1} \not\in \Pi_{\gamma'}$. This implies 
that $\gamma'$ is either $\alpha_{\ell}$ or $\alpha_{m,\ell-2} + \alpha_{\ell}$.
Since $\sharp \Gamma \geqslant 3$, the set $\Gamma'' = \Gamma \setminus 
\{ \gamma , \gamma' \}$ is non empty. By (P2) and the fact that $\Gamma$ is an antichain,
for any $\theta\in \Gamma''$, we have $\alpha_{\ell-1},\alpha_{\ell} \in  
\Pi_{\theta} \subset \{ \alpha_{m+1},\dots , \alpha_{\ell}\}$. Let $m'$ be minimal such that 
$\alpha_{m'} \in\Pi_{\Gamma''}$. Then $m < m' \leqslant \ell-2$. Fix 
$\gamma'' \in \Gamma''$ such that $\alpha_{m'} \in \Pi_{\gamma''}$.

By induction, there exist $H'' = \sum_{i=m'}^{\ell} b_{i}H_{i} \in \mathfrak{h}_{\Pi(m')}$
and $n'' \in \mathbb{N}^{*}$ such that $(H'',n'')$ verifies the required conditions for $\Gamma''$
as an antichain in $\Delta_{\Pi(m')}$. Since 
$\alpha_{\ell} \in  \Pi_{\theta}$ for any $\theta \in \Gamma''$, $(H'' + k H_{\ell} , n'' + k )$ verifies also
the required conditions for any $k\in \mathbb{N}$. Thus
we may assume that $b_{\ell} > b_{\ell-1}$.  

We have $\gamma' = \beta' + \delta'$ where $\beta' = \alpha_{m,m'-1}$ and 
$\delta' = \alpha_{m',\ell-2}+\alpha_{\ell}$. By (P2), $\gamma'' - \delta'$ is a non zero sum 
of simple roots in $\Pi (m')$. It follows that 
$0 \leqslant \delta' ( H'') < \gamma'' (H'') = n''$. Set
$$\begin{array}{c}
H' = \sum\limits_{i=m}^{m'-1} ( n'' - \delta' (H'') ) H_{i}
+ (m'-m) H''
\end{array}$$
Since $m' > m$, $(H', (m'-m) n'')$ verifies the required conditions for $\Gamma'$ as an antichain in 
$\Delta(m)$.

Now
$
0 \leqslant \delta (H') = (m'-m) (n'' -\delta'(H'') + \delta (H'')  )
= (m'-m) (n'' - b_{\ell} + b_{\ell-1})$.
Thus $\delta (H') < (m'-m) n''$
because $b_{\ell} > b_{\ell-1}$. 
Consequently, by setting 
$$\begin{array}{c}
H = \sum\limits_{i=1}^{m-1} \bigl( (m'-m) n'' -  \delta ( H' )  \bigr) H_{i} + (m-1) H', 
\end{array}$$
the pair $\bigl( H , (m-1)(m'-m)n'' \bigr)$ verifies the required conditions.

\smallskip
\noindent $\circ$ Subcase (ii) : $\sharp \Gamma_{1} = 2$.
\smallskip

In this case, 
$\Gamma_{1} = \{ \alpha_{1,\ell-2} + \alpha_{\ell-1} , \alpha_{1,\ell-2} + \alpha_{\ell} \}$.
The fact that $\Gamma$ is an antichain implies that
$\alpha_{\ell-1}, \alpha_{\ell}\in \Pi_{\theta}$ for any $\theta \in \Gamma'$. 
Let us fix $\gamma' \in \Gamma'$ such that $\alpha_{m}\in\Pi_{\gamma'}$.

By induction, there exist $H' = \sum_{i=m}^{\ell} b_{i}H_{i} \in \mathfrak{h}_{\Pi(m)}$
and $n' \in \mathbb{N}^{*}$ such that $(H',n')$
verifies the required conditions for $\Gamma'$ as an antichain in $\Delta_{\Pi (m)}$.
Set 
$$\begin{array}{c}
H'' = \sum\limits_{i=m}^{\ell-2} 2 b_{i}H_{i}  + (b_{\ell-1} + b_{\ell} ) ( H_{\ell-1} + H_{\ell} ).
\end{array}$$
Since $\alpha_{\ell-1}, \alpha_{\ell} \in \Pi_{\beta}$  for any 
$\beta \in \Gamma'$, it follows from (P2) that $(H'' , 2n')$ verifies also the required conditions
for $\Gamma'$ as an antichain in $\Delta_{\Pi (m)}$.

We have $\alpha_{1,\ell-2} + \alpha_{\ell-1} = \alpha_{1,m-1} + \delta_{1}$ and
$\alpha_{1,\ell-2} + \alpha_{\ell}  = \alpha_{1,m-1} + \delta_{2}$ where 
$\delta_{1} = \alpha_{m,\ell-2} +\alpha_{\ell-1}$ and $\delta_{2} = \alpha_{m,\ell-2} +\alpha_{\ell}$.
As in Subcase (i), we check readily that $\gamma' - \delta_{1}$ and $\gamma'-\delta_{2}$ 
are non zero sums of simple roots in $\Pi (m)$. It follows that
$0 \leqslant \delta_{1} (H'') = \delta_{2} (H'' )  < \gamma'(H'') = 2n'$.
Set 
$$\begin{array}{c}
H = \sum\limits_{i=1}^{m-1} (2n' - \delta_{1}(H'') ) H_{i} + 
(m-1) H''.
\end{array}$$
Then $( H ,  2(m-1)n' )$ verifies the required conditions.
\smallskip

\noindent$\triangleright$ Case 3 : $\Delta$ is of exceptional type.
\smallskip

The case $\Delta$ is of type $G_{2}$ is void since $\sharp \Gamma \geqslant 3$.
For the other exceptional types, $\sharp \Delta_{1}^{+}$ is 
$16, 33, 78, 15$ respectively for $E_{6}$, $E_{7}$, $E_{8}$ and $F_{4}$, the maximal 
cardinality of $\Gamma_{1}$ is $2, 3, 5, 2$ and the number of possibilities for $\Gamma_{1}$ is
$26, 119, 1348, 22$. So there are too many cases to consider for $\Gamma_{1}$
to try to adapt the arguments in case 1. 

Observe that if $\Gamma$ contains a simple root $\alpha_{i}$,
then $\Pi_{\Gamma \setminus \{ \alpha_{i}\} } = \Pi \setminus \{\alpha_{i}\}$ because
$\Pi_{\Gamma} = \Pi$. By induction, 
there exists $(H',n') \in \mathfrak{h}_{\Pi \setminus \{ \alpha_{i}\} } \times \mathbb{N}^{*}$
verifying the required conditions for $\Gamma \setminus \{ \alpha_{i}\}$ as an antichain in 
$\Delta_{\Pi \setminus \{ \alpha_{i}\} }$. So $(H' + n' H_{i} , n' )$ verifies the required
conditions. Thus we are left to prove the result for $\Gamma$ 
verifying $\sharp \Gamma \geqslant 3$, $\Pi_{\Gamma} =\Pi$, 
$\Gamma\cap \Pi = \emptyset$ and $\Gamma$ is maximal 
by inclusion. The number of such $\Gamma$  is 
$91$ for $E_{6}$, $512$ for $E_{7}$, $3289$ for $E_{8}$ and $10$ for $F_{4}$.

Computations were carried out using {\sc Gap4} to obtain in all these cases $(H,n)$ verifying the required 
conditions. The table below gives $(H,n)$ for the $10$ cases in $F_{4}$. 

{\scriptsize$$
\begin{array}{c|c}
\Gamma & ( H , n ) \\ 
\hline \hline
\{ \alpha_{1}+\alpha_{2} , \alpha_{2}+\alpha_{3} , \alpha_{3}+\alpha_{4} \}
& ( H_{1}+H_{2}+H_{3}+H_{4} , 2 ) \\
\hline 
\{ \alpha_{1}+\alpha_{2} , \alpha_{2}+2\alpha_{3} ,\alpha_{3}+\alpha_{4} \}
& (2H_{1} + H_{2} + H_{3}+2H_{4} , 3)  \\ 
\hline 
\{ \alpha_{1}+\alpha_{2} , \alpha_{2}+2\alpha_{3} ,\alpha_{2}+\alpha_{3}+\alpha_{4} \}
& (2H_{1} + H_{2} + H_{3}+H_{4} , 3) \\
\hline 
\{ \alpha_{1}+\alpha_{2}+\alpha_{3} , \alpha_{2}+2\alpha_{3} , \alpha_{3}+\alpha_{4} \}
& (H_{1} + H_{2}+ H_{3}+2H_{4} , 3) \\
\hline 
\{ \alpha_{1}+\alpha_{2}+\alpha_{3} , \alpha_{2}+2\alpha_{3} ,\alpha_{2}+\alpha_{3}+\alpha_{4} \} 
& (H_{1}+H_{2}+H_{3}+H_{4} , 3) \\
\hline 
\{ \alpha_{1}+\alpha_{2}+2\alpha_{3} , \alpha_{1}+\alpha_{2}+\alpha_{3}+\alpha_{4} ,
\alpha_{2}+2\alpha_{3}+\alpha_{4} \} & (H_{1}+H_{2}+H_{3}+H_{4} , 4) \\ 
\hline 
\{ \alpha_{1}+\alpha_{2}+2\alpha_{3} , \alpha_{1}+\alpha_{2}+\alpha_{3}+\alpha_{4} ,
\alpha_{2}+2\alpha_{3}+2\alpha_{4} \} & ( 2H_{1} + H_{2}+H_{3}+H_{4}, 5) \\
\hline 
\{ \alpha_{1}+2\alpha_{2}+2\alpha_{3} , \alpha_{1}+\alpha_{2}+\alpha_{3}+\alpha_{4} ,
\alpha_{2}+2\alpha_{3}+\alpha_{4} \} & (H_{1}+H_{2}+H_{3}+2H_{4} , 5) \\ 
\hline 
\{ \alpha_{1}+2\alpha_{2}+2\alpha_{3} , \alpha_{1}+\alpha_{2}+\alpha_{3}+\alpha_{4} ,
\alpha_{2}+2\alpha_{3}+2\alpha_{4} \} & (3H_{1} +H_{2}+H_{3}+2H_{4} , 7) \\ 
\hline 
\{ \alpha_{1}+2\alpha_{2}+2\alpha_{3} , \alpha_{1}+\alpha_{2}+2\alpha_{3}+\alpha_{4} ,
\alpha_{2}+2\alpha_{3}+2\alpha_{4} \} & (H_{1}+H_{2}+H_{3}+H_{4} , 5) 
\end{array}
$$}

\noindent Note that the values $n$ obtained in our computations are minimal. For the other types, 
the largest $n$ found is $9$ for $E_{6}$, $14$ for $E_{7}$ and $25$ for $E_{8}$.
\end{proof}

\begin{remarks}
\begin{enumerate}
\item In the proof of Theorem \ref{antichain-hyperplane}, we have $\Pi_{\Gamma'} =\Pi (m)$ in Case 1 
and most cases in Case 2.
\item Our proof relies on a type by type analysis of positive roots. As this is a geometrical property,
it would be nice to have a more geometric proof.
\end{enumerate}
\end{remarks}

Observe that the natural $\mathbb{C}^{*}$-action on $\mathfrak{n}$
commutes with the action of $\group{B}$. We have therefore an action of 
$\widehat{\group{B}} = \group{B} \times \mathbb{C}^{*}$ 
(resp. $\widehat{\group{T}} = \group{T} \times \mathbb{C}^{*}$) 
on $\mathfrak{n}$ given by 
$(\sigma, \lambda ) (X) = \sigma (\lambda X) = \lambda \sigma (X)$
for $(\sigma, \lambda )\in \widehat{\group{B}}$ and $X\in\mathfrak{n}$.
For any group $\group{H}$ acting on $\mathfrak{n}$ and $X\in\mathfrak{n}$,
we shall denote by $\Omega_{\group{H}}(X)$ the $\group{H}$-orbit of $X$.

\begin{corollary}\label{description}
Let $\mathfrak{m}$ be a nilpotent ideal of $\mathfrak{b}$ and 
$X\in \mathfrak{m}$ be such that $\supp (X) = \soc{\mathfrak{m}}$.
\begin{enumerate}
\item The $\group{B}$-orbit $\Omega_{\group{B}}(X)$ is conical.
\item For any $Y\in\mathfrak{m}^{\bullet}$, we have
$X \in \overline{\Omega_{\widehat{\group{T}}} (Y)}$.
\item $\Omega_{\group{B}}(X)$ is the unique closed $\group{B}$-orbit of $\mathfrak{m}^{\bullet}$.
\end{enumerate}
\end{corollary}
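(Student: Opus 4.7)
Throughout set $\Gamma = \soc{\mathfrak{m}}$ and fix a pair $(H,n)\in\mathfrak{h}\times\mathbb{N}^{*}$ furnished by Theorem~\ref{antichain-hyperplane}: so $\alpha(H)\in\mathbb{N}^{*}$ for every $\alpha\in\Pi$ and $\gamma(H)=n$ for every $\gamma\in\Gamma$. Any $\alpha\in \Delta_{\mathfrak{m}}^{+}$ can be written $\alpha = \gamma + \sum_{i} n_{i}\alpha_{i}$ with $\gamma\in\Gamma$ and $n_{i}\in\mathbb{N}$, whence $\alpha(H) = n + \sum_{i} n_{i}\alpha_{i}(H) \geqslant n$, with equality if and only if $\alpha\in\Gamma$. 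This inequality is the one technical input I will rely on; the three parts then follow mechanically.

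Write $X = \sum_{\gamma\in\Gamma} c_{\gamma}X_{\gamma}$ with $c_{\gamma}\neq 0$ and $Y = \sum_{\alpha\in\supp(Y)} d_{\alpha}X_{\alpha}$ with $d_{\gamma}\neq 0$ for $\gamma\in\Gamma$. For (1), note $\exp(tH)\cdot X = e^{tn}X$ for every $t\in\mathbb{C}$, so $\mathbb{C}^{*}\cdot X\subset \group{T}\cdot X \subset \Omega_{\group{B}}(X)$. For (2), consider the cocharacter $z\mapsto (\lambda(z), z^{-n})$ of $\widehat{\group{T}}$, where $\lambda(z)\cdot X_{\alpha} = z^{\alpha(H)} X_{\alpha}$; then
$$
(\lambda(z),z^{-n})\cdot Y = \sum_{\alpha} d_{\alpha}\, z^{\alpha(H)-n} X_{\alpha},
$$
and the inequality yields $\lim_{z\to 0}(\lambda(z),z^{-n})\cdot Y = Y_{0} := \sum_{\gamma\in\Gamma} d_{\gamma}X_{\gamma}$, which therefore lies in $\overline{\Omega_{\widehat{\group{T}}}(Y)}$. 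Since $\Gamma$ is linearly independent in $\mathfrak{h}^{*}$ by Remark~\ref{antichain-remark}, I can pick $H'\in\mathfrak{h}$ with $e^{\gamma(H')} = c_{\gamma}/d_{\gamma}$ for each $\gamma\in\Gamma$, giving $\exp(H')\cdot Y_{0} = X$. Hence $X\in \group{T}\cdot \overline{\Omega_{\widehat{\group{T}}}(Y)} \subset \overline{\Omega_{\widehat{\group{T}}}(Y)}$, proving (2).

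For (3), let $\mathcal{O}$ be the unique closed $\group{B}$-orbit in $\mathfrak{m}^{\bullet}$ given by Corollary~\ref{main-corollary}. For $\lambda\in\mathbb{C}^{*}$ the dilation $Y\mapsto \lambda Y$ is a $\group{B}$-equivariant automorphism of the $\mathbb{C}^{*}$-stable variety $\mathfrak{m}^{\bullet}$, so $\lambda\cdot\mathcal{O}$ is again a closed $\group{B}$-orbit in $\mathfrak{m}^{\bullet}$ and therefore equals $\mathcal{O}$ by uniqueness; that is, $\mathcal{O}$ is conical. Consequently $\Omega_{\widehat{\group{T}}}(Y)\subset\mathcal{O}$ for every $Y\in\mathcal{O}$, and part~(2) places $X$ in $\overline{\mathcal{O}}$ (closure in $\mathfrak{m}$). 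Since $X\in\mathfrak{m}^{\bullet}$ and $\overline{\mathcal{O}}\cap\mathfrak{m}^{\bullet} = \mathcal{O}$, we obtain $X\in\mathcal{O}$, whence $\Omega_{\group{B}}(X) = \mathcal{O}$. The one substantive step is the degeneration in (2), powered by Theorem~\ref{antichain-hyperplane}; once the separating one-parameter subgroup is available, everything else is a formal consequence of conicity and the uniqueness already recorded in Corollary~\ref{main-corollary}.
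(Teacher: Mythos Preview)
Your proof is correct, and parts~(1) and~(2) are essentially the paper's argument made explicit: where the paper invokes \cite[Proposition~2.18]{C} on torus orbit closures and faces of the weight polytope, you write down the one-parameter subgroup $z\mapsto(\lambda(z),z^{-n})$ directly and compute the limit. The key observation $\alpha(H)\geqslant n$ with equality exactly on $\Gamma$ is precisely the ``face'' statement the paper extracts from Theorem~\ref{antichain-hyperplane}.

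Your part~(3), however, is a genuinely different and rather slicker deduction. The paper combines (1) and (2) into the inclusion $\Omega_{\widehat{\group{B}}}(X)\subset\overline{\Omega_{\widehat{\group{B}}}(Y)}$ for all $Y\in\mathfrak{m}^{\bullet}$, then uses a dimension count to conclude that $\Omega_{\group{B}}(X)$ has minimal dimension among $\group{B}$-orbits in $\mathfrak{m}^{\bullet}$, hence is closed; uniqueness from Corollary~\ref{main-corollary} finishes. You instead start from the unique closed orbit $\mathcal{O}$, observe that uniqueness forces $\mathcal{O}$ to be $\mathbb{C}^{*}$-stable (a nice point the paper does not isolate), and then apply~(2) to some $Y\in\mathcal{O}$ to place $X$ in $\overline{\mathcal{O}}\cap\mathfrak{m}^{\bullet}=\mathcal{O}$. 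This avoids the dimension inequality entirely and makes the logical dependence on Corollary~\ref{main-corollary} more transparent; the paper's route, on the other hand, yields the extra information that $\Omega_{\group{B}}(X)$ is of minimal dimension, which your argument does not directly give.
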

\begin{proof}
1. By Theorem \ref{antichain}, the set $\soc{\mathfrak{m}}$ is linearly independent.
Therefore 
\begin{equation}\label{eq:conical}
\Omega_{\group{T}}(X) = \{ Y \in \mathfrak{m}\, ; \supp (Y) = \soc{\mathfrak{m}}\}.
\end{equation}
Hence $\Omega_{\mathbb{C}^{*}}( X )\subset \Omega_{\group{T}}(X)$, which in turn implies that 
$\Omega_{\widehat{\group{B}}} (X) = \Omega_{\group{B}}(X)$. 

2. By Theorem \ref{antichain-hyperplane}, $\soc{\mathfrak{m}}$ is exactly the set of elements in
$\supp (Y)$ lying on a face of the convex hull
of $\supp (Y)$. We obtain from \cite[Proposition 2.18]{C} that 
$\overline{\Omega_{\widehat{\group{T}}} (Y)}$ contains
an element in $\mathfrak{m}$ whose support is $\soc{\mathfrak{m}}$. 
So the result follows by \eqref{eq:conical}.

3. By Corollary \ref{main-corollary}, it suffices to show that $\Omega_{\group{B}} (X)$ is closed in 
$\mathfrak{m}^{\bullet}$. Now points 1 and 2 say that for any $Y\in\mathfrak{m}^{\bullet}$, we have
$
\Omega_{\group{B}}(X)  = \Omega_{\widehat{\group{B}}} (X) 
\subset  \overline{\Omega_{\widehat{\group{B}}} (Y)}.
$
It follows that if $Y\not\in \Omega_{\group{B}} (X)$, then 
$$
\dim \Omega_{\group{B}}(X) = \dim \Omega_{\widehat{\group{B}}} (X)
< \dim  \overline{\Omega_{\widehat{\group{B}}} (Y)} \leqslant \dim \Omega_{\group{B}}(Y) + 1.
$$
Thus $\Omega_{\group{B}}(X)$ is a $\group{B}$-orbit of minimal dimension in $\mathfrak{m}^{\bullet}$,
so it is closed.
\end{proof}

\begin{remarks}\label{conical}
\begin{enumerate}
\item It follows immediately from Corollary \ref{description} that an element
$X\in\mathfrak{b}$ is nilpotent if and only if $0 \in \overline{\Omega_{\group{B}} ( X)}$.
\item $\group{B}$-orbits of nilpotent elements in $\mathfrak{b}$ are not in general conical. 
Here are examples of non conical $\group{B}$-orbits which are minimal with respect to the 
rank of a given type where we use the numbering of simple roots in \cite{TY}. 
\begin{itemize}
\item[$A_{6}$ :] $X_{\alpha_{1}}+X_{\alpha_{3}}+X_{\alpha_{4}}+X_{\alpha_{6}}+
X_{\alpha_{1}+\alpha_{2}}+X_{\alpha_{5}+\alpha_{6}}+
X_{\alpha_{2}+\alpha_{3}+\alpha_{4}+\alpha_{5}}$
\item[$B_{4}$ :] $X_{\alpha_{2}}+X_{\alpha_{4}}+X_{\alpha_{1}+\alpha_{2}}+X_{\alpha_{2}+\alpha_{3}}
+X_{\alpha_{1}+\alpha_{2}+2\alpha_{3}+2\alpha_{4}}$
\item[$C_{4}$ :] $X_{\alpha_{1}}+X_{\alpha_{3}}+X_{\alpha_{4}}+X_{\alpha_{1}+\alpha_{2}}+
X_{2\alpha_{2}+2\alpha_{3}+\alpha_{4}}$
\item[$D_{5}$ :] $X_{\alpha_{2}}+X_{\alpha_{4}}+X_{\alpha_{5}}+X_{\alpha_{1}+\alpha_{2}}+
X_{\alpha_{2}+\alpha_{3}}+X_{\alpha_{1}+\alpha_{2}+2\alpha_{3}+\alpha_{4}+\alpha_{5}}$
\item[$E_{6}$ :] $X_{\alpha_{1}}+X_{\alpha_{2}}+X_{\alpha_{3}}+X_{\alpha_{5}}
+X_{\alpha_{6}} +X_{\alpha_{2}+\alpha_{4}}+ X_{\alpha_{1}
+\alpha_{2}+2\alpha_{3}+3\alpha_{4}+2\alpha_{5}+\alpha_{6}}$
\item[$F_{4}$ :] $X_{\alpha_{1}}+X_{\alpha_{3}}+X_{\alpha_{4}}+X_{\alpha_{1}+\alpha_{2}}
+X_{\alpha_{1}+3\alpha_{2}+4\alpha_{3}+2\alpha_{4}}$
\end{itemize}
\end{enumerate}
\end{remarks}


\begin{thebibliography}{M}%

\bibitem{B} Boos M. \textit{Finite parabolic conjugation on varieties of nilpotent matrices}.
Algebr. Represent. Theory {\bf 17} (2014), 1657--1682. 
\bibitem{C} Carrell J.B., \textit{Torus Actions and Cohomology}. In: ``Algebraic Quotients. Torus Actions and Cohomology. The Adjoint Representation and the Adjoint Action'', Encyclopaedia of Mathematical Sciences, vol 131. 
Springer, Berlin, 2002.
\bibitem{FM} Fresse L. and Melnikov A., \textit{On the singularity of the irreducible components of a Springer fiber}. Selecta Math. (N.S.) {\bf 16} (2010), 393--418. 
\bibitem{HR} Hille L. and R\"ohrle G., \textit{A classification of parabolic subgroups of classical groups with 
a finite number of orbits on the unipotent radical}. Transform. Groups {\bf 4} (1999), 35--52.
\bibitem{K} Kashin V. V., \textit{Orbits of an adjoint and co-adjoint action of Borel subgroups of a 
semisimple algebraic group}. Problems in group theory and homological algebra (Russian) 
(1990), 141--158.
\bibitem{M1} Melnikov A., \textit{$B$-orbits in solutions to the equation $X^{2}=0$ in triangular matrices}.
J. Algebra {\bf 223} (2000), 101--108.
\bibitem{R} Richardson R.W., \textit{Conjugacy classes in parabolic subgroups of semisimple algebraic groups}. 
Bull. London Math. Soc. {\bf 6} (1974), 21--24.
\bibitem{Ro} Rosenlicht M., \textit{A remark on quotient spaces}. 
An. Acad. Bras. Cienc. {\bf 35} (1963), 487--489.
\bibitem{S} Spaltenstein N., \textit{Classes unipotentes et sous-groupes de Borel}. 
Lecture Notes in Mathematics 946 Springer, Berlin, 1982. 
\bibitem{So} Sommers E., \textit{$B$-stable ideals in the nilradical of a Borel subalgebra}. 
Canad. Math. Bull. {\bf 48} (2005), 460--472.
\bibitem{So2} Sommers E., \textit{Equivalence classes of ideals in the nilradical of a Borel subalgebra}, 
Nagoya Math. J. {\bf 183} (2006), 161--185.
\bibitem{TY} Tauvel P. and Yu R.W.T., \textit{Lie algebras and algebraic groups}. 
Springer Monographs in Mathematics, Springer-Verlag, 2005.
\end{thebibliography}
\end{document}